\newcommand {\nc} {\newcommand}
\newcommand {\enm} {\ensuremath}
\def \d{\delta}
\def \uuu{u}
\nc {\bdm} {\begin{displaymath}}
\nc {\edm} {\end{displaymath}}
\numberwithin{equation}{section}
\newtheorem {theorem}[equation]{\bf{Theorem}} 
\newtheorem {lemma}[equation]{\bf Lemma}
\newtheorem {proposition}[equation]{\bf Proposition}
\newtheorem {corollary}[equation]{\bf Corollary}
\newtheorem {remark}[equation]{Remark}
\theoremstyle{definition}
\newtheorem {example}[equation]{Example}
\newcommand\fR{\mathfrak{R}} 
\renewcommand\AA{\mathbb{A}}
\newcommand\FF{\mathbb{F}}
\newcommand\WW{\mathbb{W}}
\newcommand{\Ou}{\enm{\mathcal{O}}}
\newcommand{\cR}{\mathcal{R}}
\nc {\form}[1] {\enm{\mbox{\underline{for}}}_{#1}}
\nc {\prol}[1] {\enm{\mbox{\underline{prol}}_{{#1}^*}}}
\nc {\stk} {\stackrel}
\newcommand{\map}{\rightarrow}
\newcommand{\Pn}[2] {\ensuremath{ {\mathbb{P}}^{#1}_{#2}}}
\nc{\Quot}[3]{\enm{ {\mathfrak{Quot}_{ {#1}/{#2}/{#3}}}}}
\nc{\Hilb}[2]{\enm{ {\mathfrak{Hilb}_{ {#1}/{#2}}}}}
\nc {\Coh}[4] {\ensuremath{H^{#1}(\Pn{#2}{},{#3}({#4}))}}
\nc {\Ch}[3] {\enm{H^{#1}(X_t,{#2}_t({#3}))}}
\nc {\Qphi}[4]{\enm{ {\mathfrak{Quot}^{~#4}_{ {#1}/{#2}/{#3}}}}}
\nc {\Gra}[4]{\enm{ {\mathfrak{Grass}_{#2}({#3},{#4})}}}
\nc {\HomA}[2]{\enm{\mathrm{Hom}_A{#1}{#2}}}
\nc {\tr}{\mathrm{tr}}
\DeclareMathOperator{\colim}{colim}
\nc {\C}[2]{\enm{\left(\begin{array}{l} {#1} \\ {#2} \end{array} \right)}}
\def \mb{\mbox}
\def \Z{{\mathbb Z}}
 \def \Z{{\mathbb Z}}
   \def \h{\hat{\ }}
\def \d{\delta}
\def \cO{\mathcal O}  \def \bX{{\bf X}} \def \bH{{\bf H}}
\def \R1{R((q))[q']\h}
\DeclareMathOperator{\Spec}{\mathrm{Spec}}
\DeclareMathOperator{\Spf}{\mathrm{Spf}}
\newcommand{\Hom}{\mathrm{Hom}}
\newcommand{\oldmarginpar}[1]{}
\nc{\bx}{\bold{x}}
\nc{\by}{\bold{y}}
\nc{\bz}{\bold{z}}
\nc{\ba}{\bold{a}}
\nc{\Fp}{\tilde{F}}
\nc{\Rp}{\tilde{R}}
\nc{\mlow}{m_{\mathrm{l}}}
\nc{\mup}{m_{\mathrm{u}}}
\nc{\ord}{\mb{ord }}
\nc{\bXp}{\bX_{\mathrm{prim}}}
\nc{\bPsi}{\bold{\Psi}}
\nc{\mult}{\mathrm{mult}}
\nc{\mbB}{\mathbbm{B}}
\nc{\mfor}[1]{{#1}^{\mathrm{for}}}
\nc{\Hdr}{\bH^1_{\mathrm{dR}}(A)}
\nc{\mP}{\mathfrak{p}}
\nc{\Proalg}[1]{\mathcal{C}_{#1}}
\nc{\prolarrow}[1]{\stk{(\uuu_{#1},\d_{#1})}{\longrightarrow}}
\nc{\tc}{\textcolor}
\nc{\nexp}[1]{\exp_{\d,{#1}}}
\nc{\gr}{\mathrm{Gr}}
\newcommand{\g}{\mathfrak{g}}
\newcommand{\Jbu}[1]{{\widehat{J^n_{\enm{Bu}}{{#1}}}}}
\newcommand{\Jbo}[1]{{J^n}{#1}}
\nc{\Sn}{S^{(n)}}
\nc{\Snr}[1]{S^{(n)}_{#1}}
\nc{\Sp}[1]{{\mathbf{Sp}_{#1}}}
 \nc{\Aff}[1]{{\mathbf{Aff}_{#1}}}
\nc{\Sch}[1]{{\mathbf{Sch}_{#1}}}
\nc{\undef}{{\color{red} (Undefined)}}
\nc{\ov}[1]{{\overline{#1}}}
\title{Arithmetic Jet Spaces}
\author{Alessandra Bertapelle}
\address{Universit\`a degli Studi di Padova, Dipartimento di Matematica ``T. Levi-Civita'', via Trieste 63, I-35121 Padova}
\email{alessandra.bertapelle@unipd.it}
\author{Emma Previato}
\address{Department of Mathematics and Statistics, 
Boston University, Boston, MA 02215-2411}
\email{ep@math.bu.edu}
\author{Arnab Saha}
\address{Indian Institute of Technology Gandhinagar, Gujarat 382355}
\email{arnab.saha@iitgn.ac.in}
\begin{document}

 \baselineskip 15pt
\begin{abstract}
	We extend Borger's construction of algebraic jet spaces to allow for	an arbitrary prolongation sequence, clarify the relation between 	Borger's and Buium's jet spaces and compare them in the extended 	sense. As a result, we strengthen a result of Buium on the	relation between Greenberg's transform and the special fiber of jet spaces,
	including the ramified case.
\end{abstract}
\maketitle

\section{Introduction}
In this paper we establish some of the foundational aspects of arithmetic jet space theory. 
In \cite{bui95} Buium introduced the general relative arithmetic jet spaces as prolongation sequences satisfying a universal property in terms of lifting $p$-derivations. It is noteworthy to mention that this theory was developed in the framework of $p$-adic formal geometry.
Since then it has been extensively studied and applied to diophantine geometry \cite{bui96}, \cite{BP}, differential 
modular forms \cite{bui00}, \cite{BuSa1}, \cite{BuSa2} and $p$-adic 
Hodge theory \cite{BoSa1}, \cite{BoSa2}. 
Very recently  the theory of $\delta$-rings (rings endowed with a $p$-derivation $\d$) have led to the development of the prismatic cohomology by  Bhatt and Scholze \cite{BhSc}. 

 On the other hand, over a Dedekind domain $\Ou$ with finite residue fields, Borger introduced arithmetic jet spaces in \cite[\S 10.3]{bor11b} which are purely algebraic. When $\Ou = \Z$, he showed that the completion of the Borger jet space along the special fiber over the fixed prime $p$ is in fact Buium's absolute arithmetic jet space.
 The word
absolute above refers to the fact that the objects and the adjunction property
are over $\Spec \Z$. However, the general setting of arithmetic jet spaces over any fixed prolongation sequence (which is not necessarily the constant prolongation sequence) is not considered in \cite{bor11b}.

In this paper, we generalize the above result in the relative setting where the constant prolongation sequence based on $\Spec \Z$ is replaced by an arbitrary prolongation sequence. 
To do so, our main result is the adjunction property proved in Theorem \ref{t.adjunction}, which we now describe in greater detail.

Firstly, we would like to fix some notations.
Let $\Ou$ be a Dedekind domain with $K$ as its field of fractions and fix a 
non-zero prime ideal $\mP \subset \Ou$. Let us also fix $\pi$ to be a generator of the prime ideal $\mP$. Let $k = \Ou/\mP$ be the residue field and also 
assume it to be a finite field of characteristic $p$ and cardinality $q=p^h$. 
The identity map on $\Ou$ is also tacitly fixed as the lift of the $q$-th power Frobenius, i.e., the identity on $k$, and the Witt vectors and jet spaces considered in this paper are regarding lifts of Frobenius compatible with respect to this choice of the identity map on $\Ou$.

The Witt vectors $W_n$ described in \cite{bor11a} are always considered with respect to the above data which we will denote by the pair $(\Ou,\mP)$. In particular 
the most commonly used $p$-typical Witt vectors come from considering $\Ou= \Z$ and $\mP=(p)$, for some prime $p \in \Z$, 
It is noteworthy to mention that the Witt vector functor $W_n$ depends on the
choice of the {\it base ring} $\Ou$ and the maximal ideal $\mP$.

Given $\Ou$-algebras $A$ and $B$, we define a {\it prolongation} $A \prolarrow{} B$ to be the data  of two maps $\uuu,\delta$,
where $\uuu\colon A \map B$ is a morphism of $\Ou$-algebras and $\d\colon A \map B$ is a set-theoretic map (also known as a {\it $\pi$-derivation relative to} $\uuu$)
that satisfies:
\begin{eqnarray}	\d (x + y) &=& \d( x) + \d(y) + C_\pi(\uuu(x), \uuu(y)) ,\label{d.sum}\\
	 \d (xy) &=& \uuu(x)^q\d (y) + \uuu(y)^q \d( x) + \pi \d( x) \d( y)  ,\label{d.prod}
\end{eqnarray} 
for all $x,y \in A$,
where $C_\pi(X,Y) = \frac{X^q + Y^q -(X+Y)^q}{\pi}\in \Ou[X,Y]$.

A sequence of prolongations of $\Ou$-algebras 
$\{C_{0} \prolarrow{0} C_{1}\prolarrow{1} C_{2} \dots\}$ is called a 
{\it prolongation sequence} if for each $n\geq 0$, 
\begin{align*}
\uuu_{n+1} \circ \d_n = \d_{n+1} \circ \uuu_n;
\end{align*}
we will denote it by  
$C_*:=\{C_n\}_{n=0}^\infty$. 
As an example, consider the   prolongation sequence $C_i=\Ou, u_i=\mathrm{id}, \d_i(x)=\d(x)=\pi^{-1}(x-x^q)$.
It will be denoted  $\Ou_*$ in what follows.
Prolongation sequences naturally form a category.

Let $R_*$ be a fixed prolongation sequence over $\cO_*$ and let $\Proalg{R_*}$ denote
the category of prolongation sequences over $R_*$.
Following  \cite{bui00} l we consider the functor  
\[J_*: \mb{Alg}_{R_0} \map \Proalg{R_*}, \quad A\mapsto J_*(A) = 
\{J_n(A)\}_{n=0}^\infty\] 
which associates to any $R_0$-algebra the canonical prolongation sequence of its $\pi$-jet algebras.

Then as in Proposition $1.1$ in \cite{bui00} $J_*A$ satisfies the following 
universal property:
\[
\Hom_{R_0}(A,C_0) \simeq \Hom_{\Proalg{R_*}}(J_*(A),C_*)
\]
for any prolongation sequence $C_* \in \Proalg{R_*}$.
Our  main  result is the following
natural bijection
\[\Hom_{R_0}(A,W_n(C)) \simeq \Hom_{R_n}(J_n(A),C),
\]
for any $R_0$-algebra $A$ and any $R_n$-algebra $C$, i.e., 
the functors $J_n$  and $W_n$ are adjoint; see Theorem \ref{t.adjunction}.	 
To prove the above result, we use an explicit expression (\ref{eq.expcomp}), an inductive formula that provides a correspondence between the Witt and the Buium-Joyal delta coordinates.
Note that here we adopt Borger's notation shifting indices on Witt vectors; see Remark \ref{rem.wn}.  The adjointness of $J_n$ and $W_n$ allows us to prove an explicit comparison result between Buium's $\pi$-jet spaces and Borger's one, even in our more general context (Corollaries \ref{BoBua}, \ref{cor.nm}).

In the last part of the paper we compare Greenberg's realization of a scheme $X$ defined over a discrete valuation ring with the special fiber of the algebraic jet space associated to $X$; see Theorem \ref{t.grj}. In particular we show that the special fiber of the $p$-jet space of a scheme $X$ defined over a complete unramified extension of $\Z_p$ coincides with the Greenberg realization of $X$; see Corollary \ref{c.infty}. This generalizes \cite[Theorem 2.10]{bui96} removing the smoothness hypothesis and a condition on the residue field.

{\bf Acknowledgements. } The authors wish to thank James Borger for several 
insightful discussions as well as bringing the authors together for 
collaboration. The second author is deeply indebted to the Boston University--Universit\`a degli Studi di Padova Faculty Exchange Program for the support that made this work possible.
The third author is also grateful to the Max Planck Institute for Mathematics 
in Bonn for its hospitality and financial support.

\section{Prolongation sequence algebras}

\subsection{$\pi$-derivations}
We look more closely    at $\pi$-derivations and prolongation sequences. Let $A,B$ be  $\cO$-algebras, $ \rho_A\colon \cO\to A, \rho_B\colon \cO\to B$  the structure morphisms and $\uuu\colon A\to B$ a morphism of $\cO$-algebras. 
Recall that a {\it $\pi$-derivation relative to $\uuu$} is a map $\d_A=\d
\colon A\to B$ that satisfies conditions \eqref{d.sum} and \eqref{d.prod}.
In the following we will  sometimes write $\d x $ in place of $\d(x)$ to relax 
the notation.

 The ring $\Ou$ is endowed with a 
unique $\pi$-derivation $\delta(x)=\pi^{-1}(x-x^q)$. We will say that $A$ is an {\it $\Ou$-algebra with $\pi$-derivation } if $A$ is endowed with a $\pi$-derivation $\d_A$ relative to ${\rm id}_A\colon A\to A$ such that $\d_A\circ \rho_A=\rho_A\circ \d  $.  
 Therefore, if $A$  is  an $\Ou$-algebra  such that $\pi A=0$, i.e.,  a $k$-algebra, $A$ admits no structure of   $\cO$-algebra with $\pi$-derivation. 
 On the other hand, if $A$  has no $\pi$-torsion, any  $\cO$-algebra endomorphism $\phi\colon A\to A$ which lifts the $q$-Frobenius modulo $\pi A$ produces a $\pi$-derivation  $\d_A (x)=\pi^{-1}(\phi(x)-x^q)$ so that $A$ has a structure of $\cO$-algebra with $\pi$-derivation.
 Morphisms between $\cO$-algebras with $\pi$-derivation are defined in 
the obvious way.

Given $\Ou$-algebras   $A$ and $B$, we define a {\it prolongation} to be the data $(\uuu,\delta)$ of two maps
\[\xymatrix{A \ar@<0.5ex>[r]^{\uuu}
	\ar@<-0.5ex>[r]_\delta 
	&B} 
\]	
where $\uuu\colon A \map B$ is a morphism of $\Ou$-algebras and $\d\colon A \map B$ is a   $\pi$-derivation relative to $\uuu$.
For brevity, we will denote the above datum by \[A \prolarrow{} B.
\]
Prolongations form a category where morphisms are given by pairs of  homomorphisms $(f,g)$ of $\cO$-algebras making the obvious squares 
\[\xymatrix{A \ar[d]^f\ar@<0.5ex>[r]^{\uuu}
	\ar@<-0.5ex>[r]_\delta 
	&B   \ar[d]^g\\
	A' \ar@<0.5ex>[r]^{v}
	\ar@<-0.5ex>[r]_\delta& B'
}
\]
commute, i.e., $g\circ \uuu=v\circ f$, $g\circ\d=\d\circ f$. 
Note that for $B=A, B'=A', f=g$, $u$ and $v$ the identity homomorphisms,  the above diagram denotes a morphism of $\cO$-algebras with $\pi$-derivations.

Let   $\{C_n, u_n, \d_n\}_{n=0}^\infty$ or simply  
$C_*:=\{C_n\}_{n=0}^\infty$ denote a {\it prolongation sequence} of $\cO$-algebras, i.e. a sequence of prolongations of $\Ou$-algebras 
$\{C_{0} \prolarrow{0} C_{1}\prolarrow{1} C_{2} \dots\}$ such that for each $n\geq 0$, 
\begin{align}\label{comprol}
\uuu_{n+1} \circ \d_n = \d_{n+1} \circ \uuu_n.
\end{align}
We will denote as $\Ou_*$ the   prolongation sequence $C_i=\Ou, u_i=\mathrm{id}, 
\d_i(x)=\d(x)=\pi^{-1}(x-x^q)$. 

A morphism $f_*\colon C_*\to D_*$ between two prolongation sequences is a sequence of $\cO$-algebra homomorphisms $f_n\colon C_n\to D_n$ which induces homomorphisms of prolongations
\[\xymatrix{C_n \ar[d]^{f_n}\ar@<0.5ex>[r]^{\uuu_n}
	\ar@<-0.5ex>[r]_{\delta_n} 
	&C_{n+1}   \ar[d]^{f_{n+1}}\\
	D_n \ar@<0.5ex>[r]^{v_n}
	\ar@<-0.5ex>[r]_{\delta_n} & D_{n+1} 
}
\]
If $f_*$ exists, we will say that $D_*$ is over $C_*$.
It is immediate to check that prolongation sequences  form a category. Note that if $(\Ou,\mP)= (\Z,(p))$, then $\Ou_*$ is the initial object in the
category of prolongation sequences.
We denote by $\Proalg{R_*}$ the category of prolongation sequences over the fixed  prolongation
sequence $R_*$.

 In this paper we will only consider  prolongation sequences $C_*$ 
over $\cO_*$. 

Note that  we may think of these
 sequences as shifted prolongation sequences \[\{C_{-1}=\cO\stackrel{(\rho_{C_0}, \rho_{C_0}\circ \d)}{\longrightarrow} C_0  \prolarrow{0} C_{1}\prolarrow{1} C_{2}\dots\}.
	\]

Now we introduce  polynomials $Q^\d$ whose role will be  more clear  after the introduction of $\pi$-jet algebras; see Remark \ref{rem.q}. 
In fact $Q^\d$ will be the image of $Q$ via the universal $\pi$-derivation. 
Following  Buium's notation, we will work with indeterminates $T=T^{(0)}, T'=T^{(1)}, T''=T^{(2)}, \dots$
 \begin{proposition}\label{p.expd}
Consider the polynomial ring   $A=\Ou[T ,T',T'',\dots ]$ and the $\cO$-linear endomorphism $\phi_A\colon A\to A$ that maps $T^{(i)}$ to $(T^{(i)})^q+\pi 
T^{(i+1)}$. It  lifts   the $q$-Frobenius  on $k[T ,T',\dots]$. Let $\d_A$ be the corresponding $\pi$-derivation and let $Q^\d$ denote $\d_A(Q)$ for any polynomial $Q\in \Ou[T ,T',\dots ]$. 
\begin{itemize}
	\item[i)] If $Q=T^{(i)}$ then $Q^\d=T^{(i+1)}$.
	\item[ii)] If $Q\in \Ou[T ,\dots, T^{(n)} ]$ then $Q^\d\in  \Ou[T ,\dots, T^{(n+1)}]$.
	\item[iii)] For any   $\Ou$-algebra with $\pi$-derivation $(B,\d)$
 and any $b\in B$ it is
	\[\d \left(Q(b,\d b, \dots,\d^{n} b)\right)=Q^\d(b,\d b,\dots,\d^{n+1}b)\in B
	\] 
	 where $\d^n\colon B\to B$ denotes the $n$-fold composition of $\d$.
\end{itemize}
\end{proposition}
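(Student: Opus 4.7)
The plan is to treat the three parts in order, exploiting the explicit formula $\delta_A(x) = \pi^{-1}(\phi_A(x) - x^q)$, and then transporting the identities to an arbitrary $(B,\delta)$ through a universal-style argument.

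For (i), this is a direct computation: taking $x = T^{(i)}$, we have $\phi_A(T^{(i)}) = (T^{(i)})^q + \pi T^{(i+1)}$, so $\delta_A(T^{(i)}) = \pi^{-1}\bigl((T^{(i)})^q + \pi T^{(i+1)} - (T^{(i)})^q\bigr) = T^{(i+1)}$.

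For (ii), the strategy is to use the formula $\delta_A(Q) = \pi^{-1}(\phi_A(Q) - Q^q)$ and show that both the numerator lies in $\Ou[T,\dots,T^{(n+1)}]$ and that it is $\pi$-divisible there. Since $\phi_A$ is an $\Ou$-algebra homomorphism, $\phi_A(Q)$ is obtained by substituting $T^{(i)} \mapsto (T^{(i)})^q + \pi T^{(i+1)}$ in $Q$, so it lies in $\Ou[T,\dots,T^{(n+1)}]$. Clearly $Q^q$ does too. For $\pi$-divisibility, observe that $\phi_A$ lifts the $q$-Frobenius: on generators $T^{(i)}$ this holds by construction, and for constants $c \in \Ou$ it follows from $c^q \equiv c \pmod \pi$ (as $|\Ou/\mP|=q$); so $\phi_A(Q) \equiv Q^q \pmod{\pi A}$. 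The last point to verify is that $\pi$-divisibility in $K[T,T',\dots]$ implies $\pi$-divisibility in $\Ou[T,T',\dots]$, which holds because $\mP = \pi\Ou$ is principal in the Dedekind domain $\Ou$.

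For (iii), the key step is to define the $\Ou$-algebra homomorphism $\psi\colon A \to B$ by $T^{(i)} \mapsto \delta^i b$ and show that it intertwines the two $\pi$-derivations: $\delta_B \circ \psi = \psi \circ \delta_A$. Once this is proven, applying it to $Q$ yields the stated identity. The intertwining is not formal since $\delta$ is not a ring map; I would argue that the set
\[
S := \{x \in A : \delta_B(\psi(x)) = \psi(\delta_A(x))\}
\]
is a sub-$\Ou$-algebra of $A$ containing every $T^{(i)}$, hence equals $A$. Containment of $T^{(i)}$ is part (i) combined with $\delta_B(\delta^i b) = \delta^{i+1} b$; containment of constants $c \in \Ou$ uses the assumption $\delta_B \circ \rho_B = \rho_B \circ \delta$; closure under addition and multiplication follows by applying the $\pi$-derivation formulas \eqref{d.sum}--\eqref{d.prod} to both $\delta_A(x+y), \delta_A(xy)$ and to $\delta_B(\psi(x)+\psi(y)), \delta_B(\psi(x)\psi(y))$ and matching the resulting expressions term by term (here it is crucial that $\psi$ is a ring map, so it commutes with $C_\pi$ and with the Frobenius-like expressions $x^q$).

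The main obstacle is really part (iii): one must avoid assuming $\pi$-torsion freeness on $B$, so the clean route is the sub-algebra argument above rather than passing through $\phi_B \circ \psi = \psi \circ \phi_A$ and dividing by $\pi$. The calculations in (i) and (ii) are routine, and the verification that $S$ is closed under the ring operations is a careful but straightforward matching of the two sides of \eqref{d.sum} and \eqref{d.prod}.
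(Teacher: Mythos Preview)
Your proof is correct. Parts (i) and (ii) coincide with the paper's argument, which simply notes that both are immediate from the formula $\d_A(Q)=\pi^{-1}(\phi_A(Q)-Q^q)$; you add a bit more detail on $\pi$-divisibility, but the substance is the same.

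For (iii) you take a genuinely different route. The paper introduces the same evaluation map $f_b\colon A\to B$, $T^{(i)}\mapsto \d^i b$, but then works on the Frobenius side: it checks $\phi\circ f_b = f_b\circ\phi_A$ on the generators $T^{(i)}$, observes that both sides are ring homomorphisms and hence agree on all of $A$, and from this concludes that $f_b$ commutes with the $\pi$-derivations. Your argument instead stays on the $\d$-side throughout, showing directly that the set $S=\{x\in A:\d_B(\psi(x))=\psi(\d_A(x))\}$ is a sub-$\Ou$-algebra via the axioms \eqref{d.sum}--\eqref{d.prod}. The paper's route is shorter, since equality of ring maps is checked on generators for free; but the final implication ``$\phi$-compatible $\Rightarrow$ $\d$-compatible'' amounts to cancelling $\pi$ in $\pi\,\d(f_b(x))=\pi\,f_b(\d_A(x))$, which is delicate when $B$ has $\pi$-torsion. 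Your subalgebra argument avoids this entirely, exactly as you flag in your closing remark, and is therefore the more robust of the two.
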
	
\begin{proof}
Assertions i) and ii) are immediate since $\d_A(Q)=\pi^{-1}(\phi_A(Q)-Q^q)$. 
For iii), let  $\phi\colon B\to B$ be the   ring endomorphism   lifting the $q$-Frobenius on $B/\pi B$ and associated to $\d$, i.e., $\phi(b)=b^q+\pi\d (b)$ for any $b\in B$. For a fixed $b\in B$, let  $f_b\colon A\to B$ denote  the 
morphism of $\cO$-algebras mapping $T^{(i)}$ to $\d^i(b)$. Since 
\[\phi(f_b(T_i))=\d^i(b)^q+\pi \d^{i+1}(b)=f_b\left(({T^{(i)}})^q+\pi T^{(i+1)}\right)=f_b(\phi_A(T^{(i)})),
\]  $f_b$ is a morphism of $\cO$-algebras with $\pi$-derivation, i.e., $\d \circ f_b=f_b\circ \d_A$. Hence
\[\d \left(Q(b,\d b, \dots,\d^{n} b)\right)=\d(  f_b(Q))=f_b(\d_A(Q)) =Q^\d(b,\d b,\dots,\d^{n+1}b),  \]
as desired. 
\end{proof}

\subsection{$\pi$-typical Witt vectors}\label{s.pityp}
With the fixed $\cO$, $\pi$ and $q$ as above, we define the $\pi$-typical Witt vector functors $W_n=W_{\pi,q,n}$ (where we omit the indices $\pi,q$ if clear from the context) in terms of the Witt polynomials; see  \cite{haz,sch} for $\cO$ a  discrete valuation ring, \cite{bor11a} for $\cO$ a Dedekind domain. 
See also \cite[\S 1.1]{ahs} for the ramification ring structure $(\cO, \pi, q)$.

For each $n \geq 0$ let us define the  $n$th \emph{ghost ring} $\prod_{i=0}^n B$  to be the $(n+1)$-product $\Ou$-algebra $ B  \times \cdots  \times B$ where 
$\lambda(b_0,\dots, b_n)=(\lambda b_0,\dots, \lambda b_n)$ for any $\lambda\in \Ou$, and similarly for the infinite product $\Pi^\infty_{i=0} B:= B \times B \times \cdots$.
Then for all $n \geq 1$ there exists a \emph{restriction}, or \emph{truncation}, map \[T_w\colon \Pi^n_{i=0} B \map \Pi^{n-1}_{i=0}B, \qquad (b_0,\cdots,b_n)\mapsto (b_0,\cdots,b_{n-1}),
\]
which is a  morphism of $\Ou$-algebras.

Now define $ W_n(B)=B^{n+1}$ as sets, and define the map of sets 
\[ w\colon W_n(B) \map \Pi_{i=0}^n B,\qquad  b_.=(b_0,\dots,b_n)\mapsto ( w_0(b_.),\dots,w_n(b_.)) 
\] where
\[ 
w_i = x_0^{q^i}+ \pi x_1^{q^{i-1}}+ \cdots + \pi^i x_i
\]
are the so-called \emph{Witt polynomials} or ghost polynomials.
The map $w$ is known as the {\it ghost} map. 

\begin{remark}\label{rem.wn}
	Do note that under the traditional indexing our $W_n$ would be denoted $W_{n+1}$; our choice of following Borger's non-standard notation is motivated by the adjunction formula   \eqref{eq.adj0}.
\end{remark}

The ghost map $w$ is injective when $B$ has no $\pi$-torsion, e.g., if $B$ is a ring of polynomials over $\Ou$ with possibly infinitely many indeterminates. This implies that  we  can endow $W_n(B)$ with a unique   $\Ou$-algebra structure such that $w$ becomes a morphism of $\Ou$-algebras.
On the other hand, for general $B$, we can induce such $\Ou$-algebra structure by representing $B$ as quotient of $\Ou[x_i]$ for a suitable family of indeterminates $x_i$;
cf. \cite[Ch 1. \S 1.1]{ff}, \cite[Proposition 1.1.8]{sch}, \cite[\S 2.4]{bor11a}. This construction is functorial in $B$.

 We will refer to $W_n(B)$ as the ring of {\it truncated $\pi$-typical Witt vectors}.  When $\Ou$ is a discrete valuation ring it is usually called the ring of truncated ramified Witt vectors.

There are \emph{restriction}, or \emph{truncation}, maps 
\[T\colon W_n(B) \map W_{n-1}(B), \qquad (x_0,\dots,x_n) \mapsto  (x_0,\dots, x_{n-1}),
\]    
which satisfy $T_w\circ w_i=w_i\circ T$  for $i\geq 1$. 	
We define the ring of \emph{$\pi$-typical Witt vectors} $W_{q,\pi}(B)=W(B) = \varprojlim W_n(B)$ with restrictions $T$ as transition maps, where subscripts $q,\pi$ are usually omitted since clear from the context.

\begin{remark}
	Given $\cO$-algebras $A$ and $B$ any prolongation $(\uuu,\d)$ of $A$ to $B$ determines a morphism of $\cO$-algebras $ A\to W_1(B), a\mapsto (\uuu(a),\d(a))$,  and conversely (cf. \cite[1.2]{bui95}).
\end{remark}

\subsection{Operations on Witt vectors}
\label{subsec-witt-operations}
Now we recall some important functorial operators on the Witt vectors. 
They are constructed from operators on the  ghost rings defined above.

First consider the left shift \emph{Frobenius} operator  on the ghost rings 
\[
F_w\colon \Pi^n_{i=0} B \map \Pi^{n-1}_{i=0} B, \qquad (b_0,\dots,b_n) \mapsto (b_1,\dots,b_n). 
\]
and passing to limit
\[
F_w\colon \Pi^\infty_{i=0} B \map \Pi^{\infty}_{i=0} B, \qquad (b_0,b_1,\dots) \mapsto (b_1,b_2,\dots). 
\] 
They are morphisms of $\cO$-algebras.
The {\it Frobenius}   morphism 
$F\colon W_n(B) \map W_{n-1}(B)$, is the unique map which is functorial in $B$ and makes the following diagram 
\begin{align*}
\xymatrix{
	W_n(B) \ar[r]^w \ar[d]_F & \Pi^n_0 B \ar[d]^{F_w} \\
	W_{n-1}(B) \ar[r]_-w & \Pi_0^{n-1} B
}  
\end{align*}
commute. It is a morphism of $\cO$-algebras. 	Similarly one defines the Frobenius morphism $F\colon W(B) \map W(B)$.  

Next we consider the $\cO$-linear map
\[V_w\colon \Pi_0^{n-1}B \map \Pi_0^n B, \qquad V_w( b_0,\dots ,b_{n-1})\mapsto (0, \pi b_0,\dots,\pi b_{n-1}). 
\]
The {\it Verschiebung} $V\colon W_{n-1}(B) \map W_n(B)$ is the map given by
\[V(x_0,\dots,x_{n-1}) = (0,x_0,\dots,x_{n-1});
\] 
it makes the following diagram commute:
\begin{align*}
\xymatrix{
	W_{n-1}(B) \ar[r]^-w \ar[d]_V & \Pi_0^{n-1}B \ar[d]^{V_w} \\
	W_n(B) \ar[r]_-w & \Pi_0^nB
} 
\end{align*}
Similarly one defines the Verschiebung $V\colon W(B) \map W(B)$. The Frobenius 
and the Verschiebung satisfy the identities
\begin{align} 
\label{FV-pi}
FV(x) = \pi x \quad \forall x\in W(B);\\
\label{FV2}
x\cdot V(y)=V(F(x)\cdot y) \quad \forall x,y\in W(B).
\end{align}

The Verschiebung is not a ring homomorphism, but it is $\Ou$-linear. Indeed, this is clear when $B$ has no $\pi$-torsion since $V_w$ and $w$ are morphisms of $\Ou$-algebras; the general case follows considering any surjective homomorphism $\Ou[x_i, i\in I]\to B$.

Finally, we have the multiplicative {\it Teichm\"uller map} $[~]:B \map W(B)$ given by
$x\mapsto [x]= (x,0,0,\dots)$. 

\begin{remark}\label{r.delta}
	The Frobenius on $W(B)$ lifts the  $q$-Frobenius on $W(B)/\pi W(B)$, and hence it induces a $\pi$-derivation $\Delta$ on $W(B)$ relative to the identity. Although this is a well-known fact, we demonstrate by using the previous identities. Let $x=(x_0,x_1,\dots)\in W(B)$ and write $x=[x_0]+V(y)$ with $V $ the Verschiebung map and $y=(x_1,x_2,\dots)$. Then by \eqref{FV-pi}
	\[F(x)= F[x_0]+FV(y)  \equiv [x_0^q]\quad  (\text{\rm mod } \pi W(B)). \]
	On the other hand,
	\[x^q=([x_0]+V(y))^q=[x_0^q]+V(y)^q +p(\dots)\equiv  [x_0^q]+V(y)^q 
	\equiv  [x_0^q]\quad (\text{\rm  mod  } \pi W(B)), \]
	where the first equality is due to the fact that  $p$ divides ${q}\choose{j}$ for $0< j<q$, the equivalence in the middle follows from $p\in  \pi\cO$ and the latter equivalence  follows from \eqref{FV-pi} and \eqref{FV2} writing $V(y)^{2}=V(FV(y)\cdot y)=V(\pi y^2)=\pi V(y^2)$ by $\cO$-linearity of $V$. One concludes then that $F(x)-x^q \in \pi W(B)$.  
\end{remark}

Now we recall the universal property of $\pi$-typical Witt vectors: 
\begin{theorem}
	\label{uniprol}
	The functor $W$ is	the right adjoint of the forgetful functor   $ \mathrm{Alg}_\d \to  \mathrm{Alg}$ from  the category of $\Ou$-algebras with $\pi$-derivation to the category of $\Ou$-algebras, i.e., there is a natural bijection 
	\begin{equation*} 
	\Hom_{\mathrm{Alg}_\d}(C,W(B)) \simeq \Hom_{  \mathrm{Alg}}(C,B), 
	\end{equation*}
	for any $\Ou$-algebra $B$ and any $\Ou$-algebras with $\pi$-derivation $C$. 
\end{theorem}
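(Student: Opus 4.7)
The plan is to construct an explicit inverse to the map sending $\tilde f \in \Hom_{\mathrm{Alg}_\d}(C, W(B))$ to $\epsilon_B \circ \tilde f$, where $\epsilon_B = w_0\colon W(B)\to B$ is the zeroth Witt projection. Note that $\epsilon_B$ is a morphism of $\cO$-algebras, being the zeroth ghost component. Any such $\tilde f$ must intertwine the two Frobenius lifts (the one on $W(B)$ underlying $\Delta$ from Remark \ref{r.delta}, and $\phi_C(x) = x^q + \pi \d_C(x)$ on $C$), whence
\[
w_n\circ \tilde f \;=\; w_0 \circ F^n \circ \tilde f \;=\; w_0 \circ \tilde f \circ \phi_C^n \;=\; f \circ \phi_C^n,
\]
so the ghost components of $\tilde f$ are forced by $f$ and $\tilde f$ is unique once it exists.

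Next I would build a natural unit $\eta_C\colon C\to W(C)$ in $\mathrm{Alg}_\d$ characterised by $w_n(\eta_C(c))=\phi_C^n(c)$; the inverse bijection will then send $f$ to $W(f)\circ \eta_C$. When $C$ is $\pi$-torsion free, the ghost map $w\colon W(C)\hookrightarrow \prod_{n\geq 0} C$ is injective, and by Dwork's lemma for $\pi$-typical Witt vectors its image consists of those sequences $(y_n)$ satisfying $\phi_C(y_n)\equiv y_{n+1}\pmod{\pi^{n+1}C}$ for a Frobenius lift; taking the given lift $\phi_C$ and $y_n=\phi_C^n(c)$, the congruence is an equality, so $\eta_C(c)$ is a well-defined Witt vector. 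That $\eta_C$ is an $\cO$-algebra morphism follows from the injectivity of $w$ combined with each ghost component $\phi_C^n$ being one; the intertwining $F\circ \eta_C=\eta_C\circ \phi_C$ (equivalently, preservation of $\pi$-derivations) is checked componentwise via $w_n(F\eta_C c)=w_{n+1}(\eta_C c)=\phi_C^{n+1}(c)=w_n(\eta_C(\phi_C c))$.

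For arbitrary $C\in\mathrm{Alg}_\d$ (possibly with $\pi$-torsion) I would reduce to the previous case by presenting $C$ as a quotient of a free $\cO$-algebra with $\pi$-derivation, namely a polynomial ring $A=\cO[x_i,x_i',x_i'',\ldots]_{i\in I}$ with the $\pi$-derivation of Proposition \ref{p.expd}. Such $A$ is $\pi$-torsion free, $\eta_A$ is defined above, and by Proposition \ref{p.expd}(iii) together with the ghost polynomial identities its Witt coordinates are universal $\cO$-coefficient polynomials in the $\d_A^j x_i$. These universal formulas descend to define $\eta_C$ on any $C\in\mathrm{Alg}_\d$, independently of the presentation, and functoriality makes the result an $\cO$-algebra morphism compatible with $\pi$-derivations. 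The triangle identity $w_0\circ \eta_C=\mathrm{id}_C$ yields $\epsilon_B\circ(W(f)\circ \eta_C)=f\circ w_0\circ \eta_C=f$; the other composition is the identity by the uniqueness noted in the first paragraph. Naturality in $B$ and $C$ follows from the functoriality of $W$ and of $\eta$.

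The main technical obstacle is the well-definedness of $\eta_C$ in the presence of $\pi$-torsion: the ghost map is no longer injective, so one cannot read off $\eta_C(c)$ directly from its forced ghost components, and one must instead descend the universal polynomial formulas from the $\pi$-torsion-free free $\d$-algebra case, or equivalently verify directly that the recursive Witt-coordinate computation dictated by the ghost identities never requires a genuine division by $\pi$.
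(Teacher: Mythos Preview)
Your proposal is correct and follows the same approach the paper outlines: the paper does not prove Theorem~\ref{uniprol} but cites \cite{joyal} and \cite{bor11a} and then reviews the construction by introducing the section $\exp_\d\colon C\to W(C)$ (your $\eta_C$), characterized by $w_n\circ\exp_\d=\phi_C^n$ as in diagram~\eqref{d.exp}, and taking the inverse bijection to be $f\mapsto W(f)\circ\exp_\d$. One small remark: your uniqueness argument in the first paragraph, via forcing of ghost components, only directly applies when $B$ is $\pi$-torsion-free (so that $w$ on $W(B)$ is injective); this is the mirror image of the obstacle you correctly flag for $\eta_C$ when $C$ has $\pi$-torsion, and is resolved the same way, by descending the universal polynomial formulas (the $P_n$ of Proposition~\ref{p.pp}) so that the Witt coordinates themselves, not just the ghost components, of $\tilde f(c)$ are seen to be $f(P_n(c,\d c,\ldots,\d^n c))$.
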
 

This fact is explained in \cite{joyal}  for the case $\pi=p$  and in \cite[\S 1]{bor11a} for the general case. Here we review the theory as follows:

The ring $W(B)$ of  $\pi$-typical Witt vectors  is  the unique (up to unique isomorphism) $\Ou$-algebra $W(B)$ with a $\pi$-derivation $\Delta$ on $W(B)$ relative to the identity (see Remark \ref{r.delta}) and an $\Ou$-algebra restriction homomorphism $T\colon W(B) \map B$ such that, given any
$\Ou$-algebra $C$ with a $\pi$-derivation $\d$ relative to ${\rm id}_C$ and an 
$\Ou$-algebra morphism $f\colon C \map B$, there exists a unique $\Ou$-algebra 
morphism $g\colon C \map W(B)$  such that the diagram
\[
\xymatrix{
	& W(B) \ar[d]^T  \\
	C \ar[r]_f \ar[ur]^g & B
}
\]
commutes and $g \circ \d = \Delta \circ g$.

Note that if $B$ has a $\pi$-derivation $\d $ relative to the identity, the above theorem asserts the existence of a canonical section $\exp_\d\colon B\to W(B)$ of $T$;  one can check that $\exp_\d$ makes the diagram
\begin{equation}\label{d.exp}
\xymatrix{
	W(B) \ar[r]_w & \prod_{i=0}^nB\\
	B\ar[u]^{\exp_\d} \ar[ur]_{  ( {\rm id},\phi,\phi^2,\dots)}&
}
\end{equation}
commute, where  $\phi\colon B\to B$ maps $b$ to $b^q+\pi\d(b)$; cf. \cite[Prop. 1.1.23]{sch}. In particular $\exp_\d(b)=(b, \d (b), \dots)$ and the other entries are polynomials in $\d^i(b)=\d(\d(\dots\d(b)))$ with coefficients in $B$.

In the general case, one defines $g$ as the composition of the natural section $\exp_\d$ of $T\colon W(C)\to C$ with $W(f)$:
\[
\xymatrix{
	W(C) \ar@/_/[d]_T\ar[rr]^{W(f)}  & & W(B)  \ar[d]^T \\
	C \ar[rru]_g \ar[u]_{\exp_\d} \ar[rr]_f& & B
}
\]
Thanks to Proposition \ref{p.expd} we can make more explicit the description of $\exp_\d$ in \eqref{d.exp}:
\begin{proposition}	\label{p.pp} 
	There exist polynomials $P_n\in \Ou[T,T'\dots T^{(n)}]$ such that for any $\Ou$-algebra with $\pi$-derivation $B$ and any $b\in B$
	\[\exp_\d(b)= (P_0(b), P_1(b),\dots),
	\]    where $P_n(b)=P_n(b,\d b,\dots, \d^n b)$. Further $P_0(T)=T, 
	P_1(T,T')=T'$ and for every $n\geq 1$
	\[ P_n  =  P_{n-1}^\d + \sum_{i=0}^{n-2}\left[\sum_{j=1}^{q^{n-1-i}}
	\pi^{i+j-n} \C{q^{n-1-i}}{j} P_i^{ q(q^{n-1-i}-j)}( P_i^\d)^j\right],\]
	with $ P^\d_i(b)=\d(P_i(b,\d b,\dots,\d^i b))$.
\end{proposition}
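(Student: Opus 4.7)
The plan is to verify the formula on the universal example and pull back by functoriality. Take $A = \Ou[T, T', T'', \ldots]$ with the $\pi$-derivation $\d_A$ of Proposition~\ref{p.expd} and write $\phi_A(x) = x^q + \pi\d_A x$. Since $A$ is $\pi$-torsion free, the ghost map $w\colon W(A) \hookrightarrow \prod_{i \geq 0} A$ is injective, so there is a unique sequence $(P_0, P_1, \ldots) \in W(A)$ representing $\exp_\d(T)$. By the commutativity of diagram~\eqref{d.exp} specialized at $b = T$, this sequence is characterized by
\begin{equation*}
\sum_{i=0}^{n} \pi^i P_i^{q^{n-i}} \;=\; \phi_A^n(T) \qquad \text{for all } n \geq 0.
\end{equation*}
The cases $n=0$ and $n=1$ immediately give $P_0 = T$ and $P_1 = T'$.

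For the recursion, I would compute $\phi_A^n(T) = \phi_A(\phi_A^{n-1}(T))$ using that $\phi_A$ is a ring homomorphism and that $\phi_A(P_i) = P_i^q + \pi P_i^\d$ (the latter because $\d_A(P_i) = P_i^\d$ by Proposition~\ref{p.expd}(iii)). Substituting the $(n-1)$-case of the identity above and binomially expanding yields
\begin{equation*}
\phi_A^n(T) = \sum_{i=0}^{n-1}\pi^i\bigl(P_i^q + \pi P_i^\d\bigr)^{q^{n-1-i}} = \sum_{i=0}^{n-1}\pi^i P_i^{q^{n-i}} + \sum_{i=0}^{n-1}\sum_{j=1}^{q^{n-1-i}}\pi^{i+j}\binom{q^{n-1-i}}{j}P_i^{q(q^{n-1-i}-j)}(P_i^\d)^j.
\end{equation*}
Comparing this with the identity at level $n$, the first of the two sums on the right cancels the partial sum $\sum_{i=0}^{n-1}\pi^i P_i^{q^{n-i}}$ on the left, so $\pi^n P_n$ equals the remaining double sum; dividing by $\pi^n$ in the fraction field of $A$ isolates $P_n$. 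The $i = n-1$ summand has $q^{n-1-i} = 1$, so only $j=1$ contributes, with $\pi^{i+j-n}=1$ and $\binom{1}{1}=1$, producing exactly $P_{n-1}^\d$; the remaining indices $0 \leq i \leq n-2$ give precisely the bracketed double sum in the stated formula.

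Two integrality points remain. First, $P_n$ a priori lies in $A[\pi^{-1}]$, but the existence of $\exp_\d$ as an honest $\Ou$-algebra homomorphism into $W(A)$ (Theorem~\ref{uniprol}) forces $P_n \in A = \Ou[T, T', \ldots]$. Second, one shows $P_n \in \Ou[T, T', \ldots, T^{(n)}]$ by strong induction: assuming $P_i \in \Ou[T, \ldots, T^{(i)}]$ for every $i<n$, Proposition~\ref{p.expd}(ii) gives $P_i^\d \in \Ou[T, \ldots, T^{(i+1)}]$, so every summand of the recursion lies in $\Ou[T, \ldots, T^{(n)}]$. Finally, for a general $\Ou$-algebra with $\pi$-derivation $(B, \d)$ and $b \in B$, the morphism $f_b\colon A \to B$ introduced in the proof of Proposition~\ref{p.expd} intertwines the $\pi$-derivations, so $W(f_b)\circ\exp_{\d_A} = \exp_\d\circ f_b$; evaluating at $T$ gives $\exp_\d(b) = (P_0(b), P_1(b), \ldots)$ with $P_n(b) = P_n(b, \d b, \ldots, \d^n b)$.

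The main obstacle is purely bookkeeping: accurately tracking how the $i=n-1$ summand of the binomial expansion collapses to $P_{n-1}^\d$, and justifying the division by $\pi^n$ in $A[\pi^{-1}]$, which is legitimate since $A$ is $\pi$-torsion free with integrality then recovered a posteriori from Theorem~\ref{uniprol}.
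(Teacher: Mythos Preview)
Your argument is correct and follows essentially the same route as the paper: both derive the recursion by writing $\phi^n=\phi\circ\phi^{n-1}$, substituting the ghost identity at level $n-1$, expanding $(P_i^q+\pi P_i^\d)^{q^{n-1-i}}$ binomially, and cancelling the common partial sum. The only substantive difference is in the integrality step. The paper checks directly, via a short $p$-adic valuation estimate, that each individual coefficient $c_{i,j}=\pi^{i+j-n}\binom{q^{n-1-i}}{j}$ lies in $\cO$; you instead deduce $P_n\in A$ a posteriori from the existence of $\exp_\d\colon A\to W(A)$ granted by Theorem~\ref{uniprol}. This is a legitimate and cleaner shortcut, and the paper itself remarks that the integrality ``is expected due to functorial reasons.''

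One imprecision to fix: in your second integrality point you write ``every summand of the recursion lies in $\Ou[T,\ldots,T^{(n)}]$,'' but that statement is precisely $c_{i,j}\in\cO$, which you have not established. What your induction hypothesis together with Proposition~\ref{p.expd}(ii) actually gives is that the right-hand side lies in $K[T,\ldots,T^{(n)}]$ (only those variables occur). Combining this with $P_n\in\Ou[T,T',\ldots]$ from your first point yields
\[
P_n\in K[T,\ldots,T^{(n)}]\cap\Ou[T,T',\ldots]=\Ou[T,\ldots,T^{(n)}],
\]
which is the desired conclusion. Rephrasing that sentence along these lines closes the gap.
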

\begin{proof} From diagram \eqref{d.exp} we get $P_0(b)=b, P_1(b)=\d(b)$, and more generally
	
	\begin{eqnarray*}
		\sum_{i=0}^n \pi^i P_i(b)^{q^{n-i}} &=& \phi^n(b) \\
		&=& \phi (\phi^{n-1}(b)) \\
		&=& \phi\left( \sum_{i=0}^{n-1}\pi^i P_i(b)^{q^{n-1-i}} \right)\\
		&=& \sum_{i=0}^{n-1} \pi^i (\phi(P_i(b))^{q^{n-1-i}}\\
		&=& \sum_{i=0}^{n-1} \pi^i\left(P_i(b)^q + \pi \d(P_i(b))\right)^{q^{n-1-i}}\\
		&=& \sum_{i=0}^{n-1} \pi^i \left[ P_i(b)^{q^{n-i}} + \sum_{j=1}^{q^{n-1-i}}
		\C{q^{n-1-i}}{j}P_i(b)^{(q^{n-1-i}-j)q}\pi^j(\d P_i(b))^j \right] \\
		&=& \sum_{i=0}^{n-1}\pi^i P_i(b)^{q^{n-i}}+ \sum_{i=0}^{n-1} \left[
		\sum_{j=1}^{q^{n-1-i}}\pi^{i+j} \C{q^{n-1-i}}{j} P_i(b)^{(q^{n-1-i}-j)q}(\d P_i(b))^j\right]
	\end{eqnarray*}
	Hence cancelling the common terms on both sides of the above equality we get
	\[
	\pi^nP_n(b) = \pi^n\d P_{n-1}(b) + \sum_{i=0}^{n-2}\left[\sum_{j=1}^{q^{n-1-i}}
	\pi^{i+j} \C{q^{n-1-i}}{j} P_i(b)^{ q(q^{n-1-i}-j)}(\d P_i(b))^j\right].
	\]
	It is then clear that the polynomial
	\[
	P_n  =  P_{n-1}^\d + \sum_{i=0}^{n-2}\left[\sum_{j=1}^{q^{n-1-i}} c_{i,j}
	P_i^{ q(q^{n-1-i}-j)}( P_i^\d)^j\right]\quad \text{with}\quad c_{i,j}=\pi^{i+j-n} \C{q^{n-1-i}}{j}
	\]
	does the job since  $c_{i,j}\in  \Ou$. We would like to add a few words 
	explaining why $c_{i,j}$ lies in $\cO$ (although it is expected due to
	functorial reasons).   If $i+j-n\geq 0$ the fact is clear. Now assuming
	$n-i>j$, we will show that the $p$-adic valuation of $c_{i,j}$ is non-negative.  Let $q=p^h$ and let $e$ denote the ramification of $p$ in $\cO$. Since $i+j-n$ is negative and $n-1-i$ is positive, we have
	\[v_p(c_{i,j})=\frac{i+j-n}{e}+h(n-1-i)-v_p(j)> (i+j-n) +(n-1-i)-v_p(j)=j-1-v_p(j)>0,\]
	where we have used the formula \[
	v_p \C{p^{m}}{j}=m-v_p(j) .
	\]	
\end{proof}

In the case when $\pi=p, q=p$ the map $\exp_\d$ in \eqref{d.exp} is the one 
in \cite[Proposition 1]{joyal} and hence the polynomials $P^\d$ describe 
Joyal's $\d_n$-operation.

From now until the end of the section  let $R_*$ denote a  fixed prolongation sequence over $\cO_*$. 

\begin{proposition}
	\label{proluniv}
	Given any prolongation sequence $C_* \in \Proalg{R_*}$, for each $n$ there exists a canonical morphisms of $\Ou$-algebras \[C_0 \map W_n(C_n).\]
\end{proposition}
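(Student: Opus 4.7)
The plan is to construct the map explicitly using the polynomials $P_n$ from Proposition \ref{p.pp} and then verify the homomorphism property by reducing to the $\pi$-torsion-free case and checking ghost coordinates. Given $c\in C_0$, set $c^{(0)}:=c$ and inductively $c^{(i)}:=\d_{i-1}(c^{(i-1)})\in C_i$ for $1\le i\le n$. Let
$v_i:=u_{n-1}\circ\cdots\circ u_i\colon C_i\to C_n$ (with $v_n=\mathrm{id}$) and define
\[
\Psi_n(c):=\bigl(P_0(v_0c^{(0)}),\,P_1(v_0c^{(0)},v_1c^{(1)}),\,\ldots,\,P_n(v_0c^{(0)},\ldots,v_n c^{(n)})\bigr)\in W_n(C_n).
\]
This formula mimics the definition of $\exp_\d$ in the single $\pi$-derivation case, with the iterated derivatives along the prolongation sequence playing the role of $\d^i(b)$.

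The first step is to reduce to the case where each $C_j$ is $\pi$-torsion-free. I would build a free prolongation sequence $C'_*$ mapping surjectively onto $C_*$: take $C'_0=\cO[x_c: c\in C_0]$ and inductively $C'_{j+1}=C'_j[x^{(j+1)}_c:c\in C_0]$, with $u'_j$ the natural inclusion and $\d'_j$ the unique $\pi$-derivation sending $x_c^{(i)}\mapsto x_c^{(i+1)}$; such a $\d'_j$ exists because $C'_{j+1}$ is freely generated as a polynomial extension of $C'_j$. Each $C'_j$ is a polynomial ring, hence $\pi$-torsion-free, and sending $x_c\mapsto c$ extends to a surjection of prolongation sequences $p_*\colon C'_*\to C_*$. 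By the manifest naturality of the formula, $\Psi_n\circ p_0=W_n(p_n)\circ\Psi'_n$; since $W_n(p_n)$ is a ring homomorphism and $p_0$ is a surjective ring homomorphism, it suffices to prove that $\Psi'_n$ is a ring homomorphism.

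In the $\pi$-torsion-free case, the ghost map $w\colon W_n(C'_n)\to\prod_{i=0}^n C'_n$ is injective, so it is enough to show that each ghost component $w_i\circ\Psi'_n$ is an $\cO$-algebra homomorphism. Define $\phi_j\colon C'_j\to C'_{j+1}$ by $\phi_j(x)=u'_j(x)^q+\pi\d'_j(x)$; the $\pi$-derivation axioms \eqref{d.sum}, \eqref{d.prod} are precisely what makes each $\phi_j$ a ring homomorphism. The key claim is
\[
w_i(\Psi'_n(c))\;=\;v_i\bigl(\phi_{i-1}\circ\cdots\circ\phi_0(c)\bigr),
\]
which identifies the $i$-th ghost component with an iterated Frobenius lift pushed forward to $C'_n$, hence exhibits it as a composition of ring homomorphisms.

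The main obstacle lies in proving this ghost-coordinate identity. I would argue by induction on $i$, using the polynomial identity extracted from the proof of Proposition \ref{p.pp}, namely $\sum_{j=0}^i \pi^j P_j^{q^{i-j}}$ is the image of $\sum_{j=0}^{i-1}\pi^j P_j^{q^{i-1-j}}$ under the universal Frobenius lift $T^{(j)}\mapsto (T^{(j)})^q+\pi T^{(j+1)}$. Specializing this identity via $T^{(j)}\mapsto v_j(c^{(j)})$ and using the compatibility $u'_{k+1}\circ\d'_k=\d'_{k+1}\circ u'_k$ repeatedly to commute the trailing $u$'s past the $\d$'s, one transforms the universal Frobenius into the iterated composition $\phi_{i-1}\circ\cdots\circ\phi_0$ followed by $v_i$, completing the induction step from the $i-1$ case to the $i$ case. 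The bookkeeping with the compatibility relations is the principal technical point; everything else is routine.
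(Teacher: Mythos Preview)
Your proof is correct but takes a different route from the paper's. The paper passes to the direct limit $C_\infty=\varinjlim C_n$ (transition maps $u_n$), observes that the compatibility \eqref{comprol} lets the $\d_n$ assemble into a single $\pi$-derivation $\d$ on $C_\infty$ relative to the identity, and then invokes Theorem~\ref{uniprol} to obtain a ring homomorphism $\exp_\d\colon C_\infty\to W(C_\infty)$. Composing $C_0\to C_\infty\stackrel{\exp_\d}{\longrightarrow} W(C_\infty)\to W_n(C_\infty)$ and noting (via Proposition~\ref{p.pp}) that each component $P_i(c)$ depends only on $c,\d c,\ldots,\d^i c\in C_n$, the map factors through $W_n(C_n)$. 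The homomorphism property is thus inherited for free from the universal property of $W$; no ghost check or free resolution is needed.

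Your approach instead writes the formula down directly in $W_n(C_n)$ and verifies the ring-homomorphism property by hand: reduce to a $\pi$-torsion-free prolongation sequence via a free cover $C'_*\twoheadrightarrow C_*$, then check ghost components. This is sound (in particular, the fact that $p_{j+1}\circ\d'_j=\d_j\circ p_j$ holds on generators propagates to all of $C'_j$ via the sum/product rules \eqref{d.sum}, \eqref{d.prod} for $\pi$-derivations, so $p_*$ really is a morphism of prolongation sequences even when $C_*$ has $\pi$-torsion). What the paper's direct-limit trick buys is brevity: it lands immediately in the single-$\pi$-derivation setting where Theorem~\ref{uniprol} does all the work. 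What your approach buys is that it stays at finite level throughout and makes the ghost-component identity $w_i\circ\Psi'_n = v_i\circ\phi_{i-1}\circ\cdots\circ\phi_0$ fully explicit.
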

\begin{proof}
	Consider the prolongation sequence $C_*$ written as
	\[ C_0 \prolarrow{0} C_1 \prolarrow{1} \cdots ,\]
	and consider the $\Ou$-algebra obtained by taking the direct limit with respect to $\uuu_n$, denoted $C_\infty = \varinjlim C_n$. Thanks to the maps in
	\eqref{comprol}   
	the $\d_n$'s induce a $\pi$-derivation $\d$ on $C_\infty$ relative to the identity. 
	Then by the universal 
	property of Witt vectors in Theorem \ref{uniprol} we have a canonical morphism
	of $\Ou$-algebras
	\[\exp_\d: C_\infty \map W(C_\infty)\]
	where $\exp_\d(c) = (P_0(c), P_1(c),\dots)$  with $P_n(c):=P_n(c,\d c,\dots, \d^n c)$ and $P_n $ is the  polynomial introduced in  Proposition \ref{p.pp}.
	Therefore by composition we obtain 
	\[
	C_0 \stk{\uuu}{\map} C_\infty \stk{\exp_\d}{\longrightarrow}
	W(C_\infty) \stk{T}{\map} W_n(C_\infty)
	\] 
	where $T$ is the restriction map of Witt vectors.
	Now note that the map $C_0 \map W_n(C_\infty)$ is given by $c \mapsto
	(P_0(c),\dots, P_n(c))$ for all $c \in C_0$. Since $P_i(c)$, for each $i\leq n$, is a polynomial in $c,\dots, \d^ic$, with coefficients in $\Ou$, the map $T\circ~\exp_\d \circ~ \uuu$ factors through $W_n(C_n)$ and we are done.
\end{proof}

The above results say that there are, in general, no sections of the 
reduction map $W_n(C_0)\to C_0$. However one section exists by replacing $C_0$ 
with $C_\infty$ and the latter section induces a lifting of $C_0\to C_n$ to $W_n(C_n)$ as depicted below
\begin{equation*}
\xymatrix{ W_n(C_0)\ar[d]\ar@{}[drr]|(0.7){\circlearrowright}\ar[rr]^{W_n(\uuu)}&&W_n(C_n)\ar[d] \ar[rr] &&W_n(C_\infty)\ar[d]
	\\
	C_0\ar[rr]_{\uuu}\ar@{.>}[urr]^{\exists}&&C_n\ar[rr] &&C_\infty\ar@/_1.0pc/[u]_{T\circ \exp_\d} }\end{equation*}

\begin{corollary}
	\label{proluniv3}
	Let $B$ be an $R_n$-algebra. Then $W_n(B)$ is an $R_0$-algebra.
\end{corollary}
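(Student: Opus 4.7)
The plan is to reduce this to Proposition \ref{proluniv} applied to the prolongation sequence $R_*$ itself and then transport the resulting structure via functoriality of $W_n$.

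First I would observe that $R_*$ is trivially an object of $\Proalg{R_*}$ (the identity morphism serves as the structural morphism $R_*\to R_*$). Applying Proposition \ref{proluniv} to $C_*=R_*$ therefore produces a canonical morphism of $\cO$-algebras
\[
R_0 \longrightarrow W_n(R_n),
\]
which is explicitly given by $r\mapsto (P_0(r),P_1(r),\dots,P_n(r))$ in terms of the polynomials of Proposition \ref{p.pp}, evaluated on $r,\d_0 r,\dots,\d_{n-1}\cdots\d_0 r$ (so the entries do land in $R_n$, as is essential).

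Next, since by hypothesis $B$ is an $R_n$-algebra, we are given a morphism of $\cO$-algebras $R_n\to B$. Applying the functor $W_n$ yields a morphism of $\cO$-algebras
\[
W_n(R_n)\longrightarrow W_n(B).
\]
Composing with the map constructed above produces
\[
R_0 \longrightarrow W_n(R_n)\longrightarrow W_n(B),
\]
which is a morphism of $\cO$-algebras and therefore endows $W_n(B)$ with the structure of an $R_0$-algebra, as required.

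There is essentially no obstacle beyond checking that the composition is well-defined; both arrows are ring homomorphisms by construction (the first by Proposition \ref{proluniv}, the second by functoriality of $W_n$ established in \S\ref{s.pityp}). The only subtle point worth flagging in the writeup is that one is implicitly using the fact, noted in the proof of Proposition \ref{proluniv}, that the image of $R_0$ in $W_n(C_\infty)$ already lies in $W_n(R_n)$, so that no recourse to the whole $R_\infty$ is needed in the final composition.
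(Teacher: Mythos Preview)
Your proof is correct and follows essentially the same approach as the paper: apply Proposition \ref{proluniv} with $C_*=R_*$ to obtain $R_0\to W_n(R_n)$, then compose with $W_n(f)$ where $f\colon R_n\to B$ is the structure map. The paper's argument is just a slightly terser version of what you wrote.
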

\begin{proof}
	Let $f\colon R_n \map B$ be the structure map. Now by Proposition \ref{proluniv}, we have the $\Ou$-algebra map $R_0 \map  W_n(R_n)$ and hence composition
	with $W_n(f)\colon W_n(R_n) \map W_n(B)$ gives us the claim.
\end{proof}

\subsection{$\pi$-jet algebras}
We continue with the notations of the 
previous subsection and let $R_*$ denote a  fixed prolongation sequence of 
$\Ou$-algebras.  We now consider the functor  
\[J_*: \mb{Alg}_{R_0} \map \Proalg{R_*}, \quad A\mapsto J_*A = 
\{J_nA\}_{n=0}^\infty\]
as defined by Buium in \cite{bui00}.  Assume $A$ has presentation $A= R_0[\bx]/(\bf{f})$ where $\bx$ is a collection (possibly infinite) of variables $x_\gamma, \gamma\in \Gamma$, and ${\bf f}$ is the system of defining polynomials ${\bf f} = (f_j)_{j \in I}$ where $f_j \in R[\bx]$ and $I$ is an indexing set. 
One has a prolongation sequence 
\begin{equation}\label{rrr}
R_0[\bx]\prolarrow{0} R_1[\bx,\bx^{(1)}]\prolarrow{1}R_2[\bx,\bx^{(1)},\bx^{(2)}] \dots\end{equation}
where $\bx^{(i)}$ is a set of indeterminates $x^{(i)}_\gamma, \gamma\in \Gamma$, $u_i$ is the obvious extension of $R_i\to R_{i+1}$ mapping $x^{(r)}_\gamma$, $0\leq r\leq i$, to itself and $\d_i$ is a $\pi$-derivation that extends the one on $R_i$ and  maps $x^{(r)}_\gamma$ to $x^{(r+1)}_\gamma$ for $0\leq r<i$. Then as in \cite{bui00} we set for    each $n$ 
\begin{align}
\label{JnA}
J_nA = R_n[\bx,\dots {\bx^{(n)}}]/({\bf f},\dots,  \d^n {\bf f}).
\end{align}
where   $\d^i {\bf f} = (\d^i f_j)_{j \in I}$; we obtain in this way a 
prolongation sequence
\[A=J_0A\prolarrow{0} J_1A \prolarrow{1} J_2A \dots\] where $\uuu_i.\d_i$ 
still denote the maps induced by the $\uuu_i$ and $\d_i$ in \eqref{rrr}. We 
further have ring homomorphisms
\[\phi=\phi_n\colon J_{n}A\to J_{n+1}A,\quad a\mapsto \uuu_n(a)^q+\pi\d_n(a),\]
and with $\phi^n\colon A\to J_nA$ we will denote the composition $\phi_{n-1}\circ\phi_{n-2}\circ\dots\circ \phi_0$.

For any  $R_0$-algebra  $A$, $J_*A = \{J_nA\}_{n=0}^\infty$ is  called the {\it  canonical prolongation sequence} of $\pi$-jet algebras over $R_*$ associated to $A$. Then for each $n$, $J_nA$ is an $R_n$-algebra and we can  consider the direct limit $J_\infty A= \varinjlim J_nA$, with transition maps $\uuu_n$, which is endowed with a canonical $\pi$-derivation $\d= \varinjlim  \d_n$ relative to the identity which is  $\varinjlim \uuu_n$.
Moreover, any $J_n A$ is an $A$-algebra via the $\uuu_i$; for any $a\in A$, we let the same letter denote its image in $J_n A$.

\begin{remark}\label{rem.q}
	For $A=\cO[T]$, then $J_nA=\cO[T,T',\dots, T^{(n)}]$ with $\uuu_n $ the obvious inclusion and $\d_n$ the map sending a polynomial $Q$ to $Q^\d$, as defined in Proposition \ref{p.expd}.
\end{remark}

Consider the canonical morphism $\nexp{n}\colon A \map W_n(J_nA)$ as constructed in Proposition  \ref{proluniv}; it is  given by
\begin{align}
\label{eq.expcomp}
\nexp{n}(a)=(P_0(a),\dots P_n(a))
\end{align}
where $P_i(a)$ are polynomials in $a,\d(a),\dots, \d^i(a)$ with coefficients in $\Ou$ as introduced in Proposition \ref{p.pp}. In particular we have the following recursive formula 
\begin{equation}\label{eq.expfor}
P_n(a) =  P^\d_{n-1}(a) + \sum_{i=0}^{n-2}\left[\sum_{j=1}^{q^{n-1-i}}
\pi^{i+j-n}  \C{q^{n-1-i}}{j}  P_i(a)^{ q(q^{n-1-i}-j)}(  P^\d_i(a))^j\right],
\end{equation} 
for any $n\geq 1$ and $a\in A$, where $ P^\d_i(a)=\d(P_i(a,\d(a),\dots,\d^i(a)))$.

Note that for $A=R_0[\bx]=R_0[x_\gamma,\gamma\in \Gamma]$, inductively applying \eqref{eq.expfor}, we have
\begin{equation} \label{eq.pxs}
P_n(x_\gamma)=x^{(n)}_\gamma+S_{n-1}(x_\gamma) \quad \text{with}\quad S_{n-1}(x_\gamma)\in R_n[x_\gamma,\dots, x^{(n-1)}_\gamma], \quad n\geq 1,\ S_0(x_\gamma)=0.
\end{equation}
In particular $J_n(R_0[\bx])=R_n[\bx,\dots, \bx^{(n)} ]=R_n[ \bx , P_1(\bx),\dots, P_n(\bx)]$, where $P_i(\bx)$ means the collection of the polynomials $P_i(x_\gamma),\gamma\in \Gamma$. More generally for any polynomial $f\in R_0[\bx]$ we have
\begin{equation}\label{eq.pfs}
P_n(f )=\d^{n}f +S_{n-1}(f ) \quad \text{with}\quad S_{n-1}(f )\in R_n[f ,\d f ,\dots, \d^{n-1}f ], \quad n\geq 1 .
\end{equation}

Formulas \eqref{JnA},  \eqref{eq.pfs} and \eqref{eq.pxs} yield:

\begin{corollary}\label{JnPn}
	For any $R_0$-algebra $A= R_0[\bx]/({\bf f})$ we have 
	\[
	J_nA \simeq R_n[\bx,\dots, \bx^{(n)}]/ (P_0({\bf f}),\dots, P_n({\bf f}))\simeq R_n[P_0(\bx), \dots, P_n(\bx)]/ (P_0({\bf f}),P_1({\bf f}),\dots,P_n({\bf f})).
	\]
\end{corollary}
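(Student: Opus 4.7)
The definition \eqref{JnA} presents $J_nA$ as $R_n[\bx,\dots,\bx^{(n)}]/({\bf f},\d{\bf f},\dots,\d^n{\bf f})$, so the corollary reduces to two assertions inside the polynomial ring $R_n[\bx,\dots,\bx^{(n)}]$: first, that the ideals $({\bf f},\d{\bf f},\dots,\d^n{\bf f})$ and $(P_0({\bf f}),P_1({\bf f}),\dots,P_n({\bf f}))$ coincide; and second, that $R_n[P_0(\bx),\dots,P_n(\bx)] = R_n[\bx,\dots,\bx^{(n)}]$ as $R_n$-subalgebras. Both are direct consequences of the two triangular formulas \eqref{eq.pfs} and \eqref{eq.pxs} provided just before the statement, and I would handle them by induction on $n$.

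For the ideal equality I would fix a generator $f_j$ and invoke \eqref{eq.pfs}, which reads $P_i(f_j) = \d^i f_j + S_{i-1}(f_j)$ with $S_{i-1}(f_j) \in R_n[f_j,\d f_j,\dots,\d^{i-1}f_j]$ for $i\geq 1$, together with $P_0(f_j)=f_j$. The inclusion $(P_0({\bf f}),\dots,P_n({\bf f})) \subseteq ({\bf f},\dots,\d^n{\bf f})$ is immediate from this identity. For the reverse inclusion I induct on $i$: assuming $\d^0 f_j,\dots,\d^{i-1}f_j$ all lie in $(P_0(f_j),\dots,P_{i-1}(f_j))$, the displayed identity gives $\d^i f_j = P_i(f_j) - S_{i-1}(f_j)$, and $S_{i-1}(f_j)$ is an $R_n$-polynomial in the lower $\d^{<i}f_j$ which by hypothesis already lie in $(P_0(f_j),\dots,P_{i-1}(f_j))$. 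Applying this for each $j$ yields the ideal equality.

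For the algebra equality I would use \eqref{eq.pxs}: $P_i(x_\gamma) = x_\gamma^{(i)} + S_{i-1}(x_\gamma)$ with $S_{i-1}(x_\gamma) \in R_n[x_\gamma,\dots,x_\gamma^{(i-1)}]$ for every $\gamma$ and every $i\geq 1$, together with $P_0(x_\gamma)=x_\gamma$. This is a triangular change of variables on the full set of indeterminates $\{x_\gamma^{(i)}\}_{0\leq i\leq n,\gamma \in \Gamma}$ with leading coefficient $1$, so the $R_n$-algebra endomorphism sending $x_\gamma^{(i)} \mapsto P_i(x_\gamma)$ is an automorphism; equivalently, one inverts inductively by $x_\gamma^{(i)} = P_i(x_\gamma) - S_{i-1}(x_\gamma)$, the right-hand side being expressible in the $P_{<i}(x_\gamma)$ by the inductive hypothesis. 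Hence $R_n[P_0(\bx),\dots,P_n(\bx)] = R_n[\bx,\dots,\bx^{(n)}]$, and combining this with the ideal equality gives the second claimed presentation.

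There is essentially no obstacle beyond bookkeeping the inductive inversion: because the leading terms in \eqref{eq.pfs} and \eqref{eq.pxs} are $\d^i f_j$ and $x_\gamma^{(i)}$ with coefficient $1$, the triangular system inverts over $R_n$ without introducing any denominator, and in particular no $\pi$ ever appears in the inverse substitution.
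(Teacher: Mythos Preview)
Your approach is the paper's: the corollary is recorded there as a direct consequence of \eqref{JnA}, \eqref{eq.pfs}, and \eqref{eq.pxs}, and you have correctly unpacked this into an ideal equality plus a triangular change of polynomial generators.

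There is, however, a small logical gap in the ideal argument. You deduce $S_{i-1}(f_j)\in(P_0(f_j),\dots,P_{i-1}(f_j))$ from the fact that $S_{i-1}(f_j)$ is a polynomial in the $\d^{<i}f_j$, which by hypothesis lie in that ideal. But a polynomial in elements of an ideal lies in the ideal only when it has no constant term; as stated, \eqref{eq.pfs} only places $S_{i-1}(f_j)$ in the \emph{subring} $R_n[f_j,\dots,\d^{i-1}f_j]$, not in the ideal. What is missing is the observation that the universal polynomial $P_i\in\Ou[T,\dots,T^{(i)}]$ satisfies $P_i(0,\dots,0)=0$, hence so does $S_{i-1}=P_i-T^{(i)}$. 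This is immediate from the fact that $\exp_\d$ is a ring homomorphism, so $\exp_\d(0)=(0,0,\dots)$; alternatively it follows from the recursion \eqref{eq.expfor}, since every correction term carries a positive power of some $P_i^\d$. Once you add this remark, both inclusions in your inductive argument (and likewise the ``immediate'' forward inclusion) go through exactly as written.
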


We can now prove that  Buium's   $\pi$-jet algebras functor is left adjoint to the functor of $\pi$-typical Witt vectors.

\begin{theorem}\label{t.adjunction} 
	The functor $J_n$ from the category of $R_0$-algebras to the category of $R_n$-algebras is left adjoint to the functor $W_n:=W_{\pi,q,n}$.
\end{theorem}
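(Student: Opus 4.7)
The plan is to construct mutually inverse bijections
\[
\Phi \colon \Hom_{R_n}(J_n A, C) \to \Hom_{R_0}(A, W_n(C)), \qquad
\Psi \colon \Hom_{R_0}(A, W_n(C)) \to \Hom_{R_n}(J_n A, C),
\]
and then verify naturality in $A$ and $C$. The forward map is the obvious one: given $g\colon J_nA\to C$, set $\Phi(g) := W_n(g)\circ \nexp{n}$, where $\nexp{n}\colon A\to W_n(J_nA)$ is the canonical $R_0$-algebra morphism of Proposition \ref{proluniv}, and $W_n(C)$ is viewed as an $R_0$-algebra via Corollary \ref{proluniv3}.

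For $\Psi$, fix a presentation $A = R_0[\bx]/({\bf f})$ and, given $f\colon A\to W_n(C)$, write $f(x_\gamma) = (a_{\gamma,0},\dots,a_{\gamma,n})\in W_n(C)$. By \eqref{eq.pxs} the family $\{P_i(x_\gamma)\}_{\gamma,\,0\le i\le n}$ is a polynomial basis of $R_n[\bx,\dots,\bx^{(n)}]$ over $R_n$, so the prescription $P_i(x_\gamma)\mapsto a_{\gamma,i}$ determines a unique $R_n$-algebra homomorphism $\widetilde g\colon R_n[\bx,\dots,\bx^{(n)}]\to C$. The crux is to show that $\widetilde g$ descends to the quotient $J_n A$ described in Corollary \ref{JnPn}, i.e.\ that $\widetilde g(P_i(f_j)) = 0$ for every $f_j\in{\bf f}$ and every $i\le n$.

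To prove this descent, apply the same construction to the free algebra. The canonical morphism $\nexp{n}^{\mathrm{free}}\colon R_0[\bx]\to W_n(R_n[\bx,\dots,\bx^{(n)}])$ sends $x_\gamma\mapsto (P_0(x_\gamma),\dots,P_n(x_\gamma))$, and the composite $W_n(\widetilde g)\circ \nexp{n}^{\mathrm{free}}$ is an $R_0$-algebra map $R_0[\bx]\to W_n(C)$ that agrees on each generator $x_\gamma$ with the lifted map $R_0[\bx]\twoheadrightarrow A\xrightarrow{f}W_n(C)$. Since two $R_0$-algebra maps out of a polynomial ring coincide as soon as they agree on generators, the two maps are equal; evaluating at $f_j\in{\bf f}$ then forces $W_n(\widetilde g)(\nexp{n}^{\mathrm{free}}(f_j)) = 0$, and reading off Witt components via the explicit formula \eqref{eq.expcomp} gives $\widetilde g(P_i(f_j)) = 0$. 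Thus $\widetilde g$ factors through $J_n A$, and we set $\Psi(f)$ to be the induced map.

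The inverse checks are immediate on generators: $\Phi(\Psi(f))(x_\gamma) = (a_{\gamma,0},\dots,a_{\gamma,n}) = f(x_\gamma)$ and $\Psi(\Phi(g))$ sends $P_i(x_\gamma)$ to $g(P_i(x_\gamma))$, and both identifications propagate to all of $A$, respectively $J_n A$, by $R_0$- respectively $R_n$-linearity. This in turn shows that $\Psi$ is independent of the chosen presentation. Naturality in $A$ follows from functoriality of $\nexp{\bullet}$, and naturality in $C$ from functoriality of $W_n$. The principal obstacle is the well-definedness of $\Psi$, where the explicit recursive formula for the polynomials $P_n$ in Proposition \ref{p.pp}, together with the resulting presentation of $J_n A$ in Corollary \ref{JnPn}, is exactly what is needed to translate the vanishing of $f$ on $({\bf f})$ into the vanishing of $\widetilde g$ on $(P_0({\bf f}),\dots,P_n({\bf f}))$.
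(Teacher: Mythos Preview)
Your proof is correct and follows essentially the same approach as the paper's: both reduce to a polynomial presentation, use the change of basis $\{x_\gamma^{(i)}\}\leftrightarrow\{P_i(x_\gamma)\}$ from \eqref{eq.pxs} to define the map on the free level, and then verify descent to the quotient via the identity $\nexp{n}(f_j)=(P_0(f_j),\dots,P_n(f_j))$ together with Corollary \ref{JnPn}. The only cosmetic difference is that you name both directions $\Phi,\Psi$ and check they are mutual inverses (and address naturality explicitly), whereas the paper constructs what you call $\Psi$ and observes directly that it is a bijection.
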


\begin{proof}
	Let $B$ be an $R_n$-algebra and view $W_n(B)$ as an $R_0$-algebra  as in 
	Proposition \ref{proluniv3}. We first consider the case $A=R_0[\bx] =
	R_0[x_\gamma, \gamma \in \Gamma]$.
	For any  homomorphism of $R_0$-algebras
	\[
	\g\colon R_0[\bx]\longrightarrow W_n(B),\quad  x_\gamma\mapsto 
	(b_{\gamma,0},\dots,b_{\gamma,n})
	\] let $\Phi(\g)$ denote the homomorphism of $R_n$-algebras 
	\[\Phi(\g)\colon R_n[\bx,\dots, \bx^{(n)} ]=R_n[P_0(\bx), \dots, P_n(\bx)]\longrightarrow  B, \quad P_i(x_\gamma)\mapsto b_{\gamma,i}.
	\] 
	Then we have a natural bijection \[\Hom_{R_0}(R_0[\bx],W_n(B))\simeq \Hom_{R_n}(R_n[\bx,\dots, \bx^{(n)}], B), \quad \g\mapsto \Phi(\g).
	\]
	By construction the following diagram
	\begin{equation}\label{d.exppr}\xymatrix{
		R_0[\bx]  \ar[rr]^(0.3){\nexp{n}} \ar[drr]_\g && W_n(R_n[\bx,\dots, \bx^{(n)}  ])\ar[d]^{W_n(\Phi(\g))}\ar[rr]^{\text{pr}_i}&& \ar[d]_{\Phi(\g)} R_n[\bx,\dots, \bx^{(n)}] \\
		& & W_n(B) \ar[rr]^{\text{pr}_i} && B
	}\end{equation}
	commutes for all $0\leq i\leq n$, where the map $\text{pr}_i$ is the set-theoretic projection onto the $i$th component, i.e., $ \text{pr}_i(b_0,\dots,b_n)=b_i$. Note that the commutativity of the square on the right is trivial, while the commutativity of the triangle on the left depends on   $\nexp{n}(x_\gamma)=(P_0(x_\gamma),\dots, P_{n}(x_\gamma))$ \eqref{eq.expcomp},  and the definition $\Phi(\g)(P_i(x_\gamma))=b_{\gamma,i}$.

	Applying again \eqref{eq.expcomp}, one checks that $\g(f)=0$ if and only if $\phi(\g)(P_i(f))=0$ for any $0\leq i\leq n$. Together with Corollary \ref{JnPn} this fact says that  $\g$ in diagram \eqref{d.exppr} factors through 
	$R_0[\bx]/({\bold f})$ if and only if  $\Phi(\g)$ factors through  
	\[R_n[\bx,\dots, \bx^{(n)}]/(P_0({\bold f}), \dots,P_0({\bold f}))= R_n[P_0(\bx), \dots, P_n(\bx) ]/(P_0({\bold f}), \dots,P_0({\bold f})).\]
	
	Hence  for any $R_0$-algebra $A$ and any $R_n$-algebra $B$ there is a natural 
	bijection 
	\begin{equation}\label {eq.adj0}
	\Hom_{R_0}(A,W_n(B))\simeq \Hom_{R_n}(J_nA, B), \quad \g\mapsto \Phi(\g), \qquad \text{(where $W_n$ stands for $W_{\pi,q,n}$), }
	\end{equation}	 such that the diagram 
	\[\xymatrix{
		A  \ar[rr]^(0.4){\nexp{n}} \ar[drr]_\g && W_n(J_nA)\ar[d]^{W_n(\Phi(\g))}\ar[rr]^{\text{pr}_i}&& \ar[d]_{\Phi(\g)} J_nA \\
		& & W_n(B) \ar[rr]^{\text{pr}_i} && B
	}\]	
	commutes, and we are done. 
\end{proof}

\section{Jet spaces}
\label{Buim.js}
In this section, we wish to reconcile two apparently different approaches to $\pi$-jet spaces, one due to Buium and the other to Borger. When working with affine schemes we generalize both constructions to our context showing that they coincide. It is easy to see that Borger's construction extends naturally to non-affine schemes while Buium's construction requires additional assumptions,  e.g. nilpotency of $p$, and   one is led to work with formal schemes. After introducing both constructions we compare them  in Corollaries \ref{BoBua} \& \ref{cor.nm}. Our result  was known to Borger  in classical cases (cf. \cite[12.8]{bor11b}).

\subsection{Buium's $\pi$-jet spaces} 
In \cite{bui96} Buium develops the theory of $\pi$-jet spaces in the category of $\pi$-formal
schemes. However we want to talk about it in two distinct steps: first for the affine case and then for the non-affine case where we will be in the context
of $\pi$-formal schemes only.

Note that from \eqref{JnA}, one can naturally introduce the {\it Buium jet space} of an affine scheme  $X= \Spec A$ setting  
\[J^n_{Bu} X := \Spec J_nA.\]

Now we will pass to the $\pi$-formal category to follow Buium's original construction in \cite{bui96}.
Let $\hat\cO$ be the $\pi$-adic completion of $\cO_\mathfrak p$ and assume that our fixed prolongation sequence $R_*$ consist of  complete $\pi$-adic rings over $\hat \cO$. 
For an affine $\pi$-formal scheme $Y= \Spf A$, set $\Jbu Y = 
\Spf \widehat{J_nA}$ where $\widehat{J_nA}$ is the $\pi$-adic completion of
$J_nA$.
Now given a $\pi$-formal scheme $X$ over $R_0$ along with an open affine cover 
\[X = \bigcup_i V_i,
\]  
Buium in  \cite[p.315]{bui95} defines the $n$-th jet space 
$\Jbu{X}$ as 
\begin{equation}
\label{Jcover}
\Jbu X = \bigcup_i \Jbu {V_i},
\end{equation}
where the glueing data of   the spaces $\Jbu {V_i}$  naturally comes from that of the open subschemes $V_i$. 
The fundamental reason why such a glueing is possible is the fact that for $\pi$-adically complete algebras $A$ over $R_0$ one has the
following canonical isomorphisms of localized $R_n$-algebras
\begin{equation}
\label{local}
\widehat{J_n(A_s)} = \widehat{(J_nA)_s}, \mbox{ for all } s \in A\smallsetminus \mathrm{Nil}(A).
\end{equation}

Here one may ask the question of following a similar procedure as above for constructing an $n$-th jet space $\Jbu X$ for a general scheme $X$ over $R_0$. For any algebra $A$ (not necessarily $\pi$-adically complete) we have 
\begin{equation}
\label{non-local}
J_n(A_s) = J_n(A)_{s\phi(s)\cdots \phi^n(s)}, \mbox{ for all }  s \in A\smallsetminus \mathrm{Nil}(A).
\end{equation}
If $A$ is a $\pi$-adically complete algebra then (\ref{non-local}) leads to 
(\ref{local}). However this is not true in general for an $A$ that is not 
$\pi$-adically complete and hence this approach will not directly apply to the scheme context.

\subsection{The algebraic jet spaces}
\label{alg.js}
In this subsection we will develop some of the functorial theory of jet spaces over a fixed prolongation sequence $R_*$. Much of the notation is borrowed from \cite{bor11b}. One of the main goals of this section is to 
prove Theorem \ref{S03} and the proof is very much an adaption of the argument given for Theorem $12.1$ in \cite{bor11b}. However since our base prolongation sequence is not $\cO_*$ in general,  it is worthwhile to give some details.

For a fixed affine scheme $S=\Spec R$,
let  $\Aff{S}$  denote the category of affine schemes over $S$ equipped with the \'{e}tale topology and let  $\Sp{S}$ denote the category of sheaves of sets on $\Aff{S}$. Then we can consider the fully faithful embedding of the category of $S$-schemes, denoted by $\Sch{S}$, inside $\Sp{S}$. If $f\colon S'\to S$ is a morphism of affine schemes and $X$ a sheaf in $\Sp{S}$, we write $S'\times_S X$ for the pullback of $X$ to $\Sp{S'}$. When $R$ is a  ring decorated with sub- and superscripts we will  write $\Aff{R}$ in place of $\Aff{S} $ and  $\Sp{R}$ in place of  $\Sp{S}$.

Let $R_*=(R_0\to R_1\to \dots)$ be a fixed prolongation sequence and set  $\Sn=\Spec R_n$ and $\Snr m= \Sn \times_{\Spec \Ou} \Spec \cO/(\pi^m)$ its reduction modulo $\pi^m$; note that following Borger's notation  subscripts on rings become superscripts on schemes.

For any $n\geq 0$ and any sheaf $X$ in $\Sp{R_0}$ we    generalize Borger's construction introducing a  sheaf  $\Jbo X$ in $\Sp{R_n}$ given by 
\begin{equation*}
\Jbo X (B) := X(W_n(B))
\end{equation*}
for every $R_n$-algebra $B$, where $W_n(B)$ is here considered with its  $R_0$-algebra structure as in Corollary \ref{proluniv3}. When $R_*=\cO_*$, this sheaf is the one denoted by $W_{n*}(X)$ in \cite[\S 10.3]{bor11b}.
When both $X$ and $ \Jbo X$ are representable by schemes we may write
\begin{equation}\label{adjbx}
\Hom_{R_n}(\Spec B,\Jbo B) = \Hom_{R_0}(\Spec W_n(B),X).
\end{equation}

Theorem \ref{t.adjunction} immediately implies the following 

\begin{corollary}\label{BoBua}
	Let $X$ be an affine scheme over $R_0$. Then there is a natural isomorphism of sheaves
	\[\Jbo X\simeq J^n_{\enm{Bu}} X.\]
	In particular $\Jbo(X)$ is representable by an affine $R_n$-scheme.
\end{corollary}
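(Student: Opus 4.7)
The plan is to deduce the corollary almost immediately from the adjunction established in Theorem \ref{t.adjunction}, by comparing the two functors of points on $\Aff{R_n}$. First I would write $X = \Spec A$ for an $R_0$-algebra $A$, so that by the definition given in the preceding subsection, $J^n_{\enm{Bu}} X = \Spec J_n A$ as an $R_n$-scheme, and in particular it represents the functor $B \mapsto \Hom_{R_n}(J_n A, B)$ on $R_n$-algebras.

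Next I would unwind the definition of $\Jbo X$: for any $R_n$-algebra $B$,
\[
\Jbo X(B) \;=\; X(W_n(B)) \;=\; \Hom_{R_0}(A, W_n(B)),
\]
where $W_n(B)$ is regarded as an $R_0$-algebra via the canonical map $R_0 \to W_n(R_n) \to W_n(B)$ produced in Proposition \ref{proluniv} and Corollary \ref{proluniv3}. Applying the adjunction of Theorem \ref{t.adjunction} yields a bijection
\[
\Hom_{R_0}(A, W_n(B)) \;\simeq\; \Hom_{R_n}(J_n A, B) \;=\; \Hom_{R_n}(\Spec B, J^n_{\enm{Bu}} X),
\]
which is natural in $B$ because the bijection in Theorem \ref{t.adjunction} is itself natural. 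This naturality upgrades the identification to an isomorphism of presheaves on $\Aff{R_n}$, and since $J^n_{\enm{Bu}} X$ is a representable (hence sheaf) functor on the \'etale site, $\Jbo X$ is likewise a sheaf and is representable by the affine $R_n$-scheme $J^n_{\enm{Bu}} X$.

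The only point that really needs checking is that the $R_0$-algebra structure placed on $W_n(B)$ in the definition of $\Jbo X$ matches the structure used in the statement of Theorem \ref{t.adjunction}; both, however, are defined exactly by the recipe of Corollary \ref{proluniv3}, so the verification is immediate. I do not anticipate any genuine obstacle: all the content of the corollary has been packaged into Theorem \ref{t.adjunction}, and what remains is the translation between the adjunction on algebras and an isomorphism of functors of points on affine $R_n$-schemes.
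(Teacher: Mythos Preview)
Your argument is correct and matches the paper's approach exactly: the paper derives this corollary immediately from Theorem \ref{t.adjunction} with no further commentary, and your proposal simply spells out the functor-of-points translation that makes this implication transparent. The extra care you take in checking that the $R_0$-algebra structure on $W_n(B)$ agrees with that of Corollary \ref{proluniv3} is appropriate and introduces no divergence from the paper's reasoning.
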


Following \cite[\S 10.10]{bor11b} we say that a map $f\colon X\to Y$ in  $\Sp{R_0}$ is  {\it formally \'etale} if for all nilpotent closed immersions $\bar T\to T$ of affine schemes the induced map
\[X(T)\to X(\bar T)\times_{Y(\bar T)} Y (T)
\] is a bijection. Further $f$ is {\it locally of finite presentation} if for any cofiltered system
$ (T_i)_{i\in I}$ of affine schemes over $Y$,  the induced map $\colim_i \Hom_Y(T_i,X)\to \Hom_Y (\lim_i
T_i, X)$ is bijective. Finally $f$ is said to be \'etale if it is locally of finite presentation
and formally \'etale. For $f$ a morphism of schemes, these definitions are the usual ones. 

\begin{proposition}
	\label{pro.etale}
	Let $f\colon X \map Y$ be a 	formally \'{e}tale  (respectively locally of finite presentation, respectively \'etale) map in $\Sp{R_0}$. Then  the map $J^nf\colon \Jbo X\to \Jbo Y$  in $\Sp{R_n}$ has the same property. 
\end{proposition}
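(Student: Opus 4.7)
The plan is to reduce all three properties to two basic features of the Witt vector functor $W_n\colon \mathrm{Alg}_{R_n} \to \mathrm{Alg}_{R_0}$ by means of the defining identity $J^nX(B) = X(W_n(B))$, which extends the adjunction of Corollary \ref{BoBua} from representables to arbitrary sheaves $X$ by Yoneda. Under this translation, each diagrammatic bijection that needs to be checked for $J^nf$ against an affine $R_n$-scheme $\Spec B$ becomes the analogous bijection for $f$ against $\Spec W_n(B)$ viewed as an object of $\Aff{R_0}$ via Corollary \ref{proluniv3}.

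The two ingredients I need are: (a) $W_n$ preserves nilpotent thickenings, i.e., for any nilpotent ideal $I \subset B$ the induced homomorphism $W_n(B) \to W_n(B/I)$ is surjective with nilpotent kernel; and (b) $W_n$ commutes with filtered colimits of $R_n$-algebras. For (a), surjectivity is immediate since on underlying sets $W_n(-) = (-)^{n+1}$ and the functorial map is coordinatewise reduction; the kernel is the ideal of Witt vectors whose coordinates all lie in $I$, and an induction on $n$ using the additive exact sequence $0 \to VW_{n-1}(B) \to W_n(B) \to B \to 0$ together with the identity $V(a)\cdot V(b) = V(\pi ab)$ (derived by combining \eqref{FV-pi} and \eqref{FV2}) shows this ideal is nilpotent whenever $I$ is. For (b), $W_n(B) = B^{n+1}$ as a set with ring structure given by universal Witt polynomials, and finite products of sets commute with filtered colimits; the $R_0$-algebra structure from Corollary \ref{proluniv3} is preserved automatically since it is built functorially from $R_0 \to W_n(R_n)$.

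Granted (a) and (b), the three cases of the proposition become essentially formal. For formally \'etale $f$, a nilpotent closed immersion $\bar T = \Spec B/I \to T = \Spec B$ in $\Aff{R_n}$ over $J^nY$ corresponds under the adjunction to a pair of affine $R_0$-schemes over $Y$, which by (a) is again a nilpotent closed immersion; the formally \'etale bijection for $f$ then unwinds to the desired bijection for $J^nf$. For $f$ locally of finite presentation, given a cofiltered system $(T_i = \Spec B_i)$ over $J^nY$, property (b) gives $\Spec W_n(\colim B_i) = \lim \Spec W_n(B_i)$ in $\Aff{R_0}$ over $Y$, and the lfp bijection for $f$ translates under the adjunction to the desired one for $J^nf$. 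The \'etale case is the conjunction of the previous two.

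The main obstacle is part (a), specifically the nilpotency of the kernel, which is the one place where the nonlinear ring structure of the Witt vectors genuinely enters. The Verschiebung-filtration induction sketched above is the cleanest route, as the identity $V(a)V(b) = V(\pi ab)$ immediately propagates nilpotency of $I$ through the short exact sequence in the inductive step; alternatively one can carry out a direct computation using the polynomials $P_n$ of Proposition \ref{p.pp} to control how products of kernel elements land in deeper $V$-ideals. Once (a) and (b) are established, the remainder of the argument is bookkeeping with the adjunction.
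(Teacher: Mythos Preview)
Your proposal is correct and follows essentially the same approach as the paper: translate the diagrams for $J^nf$ against $\Spec B$ into diagrams for $f$ against $\Spec W_n(B)$ via the identity $J^nX(B)=X(W_n(B))$, then use that $W_n$ preserves nilpotent thickenings and filtered colimits. The paper's proof is terser---it cites \cite[Cor.~6.4]{bor11a} for the nilpotency of $W_n(I)$ and defers to \cite[11.1a)]{bor11b} for the locally-of-finite-presentation case---whereas you supply both arguments directly; your $V$-filtration induction for nilpotency (using $V(a)V(b)=V(\pi ab)$ together with $W_n(I)\cap VW_{n-1}(B)=V(W_{n-1}(I))$) is exactly the standard proof behind that citation, and your filtered-colimit argument is likewise the content of Borger's 11.1a).
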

\begin{proof} The proof in \cite[11.1]{bor11b} works in this context also. We recall it briefly. Assume $f$ is formally \'etale. 	Let $\bar T=\Spec(B/I)\to T=\Spec(B)$ be a  closed immersion of affine $R_n$-schemes. Assume $I^m=0$ and note that $W_n(I)$ is a nilpotent ideal in $W_n(B)$ \cite[Corollary 6.4]{bor11a}. Then 
	\begin{multline*}\Jbo X(T)=X(W_n(B ))=X(W_n(B)/W_n(I))\times_{Y(W_n(B)/W_n(I))} Y(W_n(B))=\\
	X(W_n(B/I))\times_{Y(W_n(B/I))} Y(W_n(B))= \Jbo X(\bar T) \times_{\Jbo Y(\bar T)} \Jbo Y(T). 
	\end{multline*}
	Hence $\Jbo f$ is formally \'etale.

	Assume now $f$ is locally of finite presentation. We can repeat word by word the argument in \cite[11.1a)]{bor11b} setting $W_n^*T:=\Spec W_n(B)$ for any affine scheme $T=\Spec B$ over $Y$. The \'etale case follows then by definition of \'etaleness.
\end{proof}

\begin{corollary}\label{cor.open}
	Let $f\colon X\to Y$ be an open immersion of $R_0$-schemes. Then $\Jbo f\colon \Jbo X\to \Jbo Y$ is an open immersion of sheaves in $ \Sp{R_n}$, i.e., it is an \'etale monomorphism.  
\end{corollary}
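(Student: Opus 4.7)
The plan is to verify the two properties packaged in the parenthetical: étaleness and the monomorphism property.

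For étaleness, I would simply invoke Proposition~\ref{pro.etale}. Any open immersion of schemes is étale (it is both locally of finite presentation and formally étale, as every open immersion is formally unramified and formally smooth), so applying the proposition yields that $\Jbo f$ is an étale morphism of sheaves on $\Aff{R_n}$.

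For the monomorphism property, the idea is to unwind the definition of $\Jbo$ pointwise. A morphism of sheaves on $\Aff{R_n}$ is a monomorphism precisely when it is injective on $T$-valued points for every affine $R_n$-scheme $T$. For $T=\Spec B$, by construction $\Jbo X(T)=X(W_n(B))$ and $\Jbo Y(T)=Y(W_n(B))$ (viewed as sets of $R_0$-algebra maps via Corollary~\ref{proluniv3}), and $(\Jbo f)(T)$ is nothing but the map on $W_n(B)$-valued points induced by $f$. Since $f\colon X\to Y$ is an open immersion of $R_0$-schemes, it is in particular a monomorphism in $\Sp{R_0}$, so $X(W_n(B))\to Y(W_n(B))$ is injective. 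This yields the monomorphism property of $\Jbo f$.

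Putting the two pieces together gives the corollary. There is no real obstacle: the content is entirely bookkeeping once Proposition~\ref{pro.etale} is available, together with the definitional observation that the monomorphism property is preserved by restricting representable sheaves along the $R_0$-algebra map $R_0\to W_n(B)$. The only mildly delicate point is making sure the reader agrees that ``étale monomorphism of sheaves'' is the right notion of open immersion in the sheaf-theoretic setting of $\Sp{R_n}$; the parenthetical in the statement already signals this convention, and no further elaboration is needed.
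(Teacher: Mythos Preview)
Your proposal is correct and follows essentially the same approach as the paper: \'etaleness via Proposition~\ref{pro.etale}, and the monomorphism property from the definition of $\Jbo$. The only cosmetic difference is that the paper phrases the monomorphism step abstractly---$\Jbo$ is right adjoint to $\Spec(W_n(-))$ on affine $R_n$-schemes, and right adjoints preserve monomorphisms---whereas you spell this out concretely on $B$-points; these are the same argument.
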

\begin{proof}
	The functor $\Jbo \ $ preserves monomorphisms since it   is right adjoint to the functor  $\Spec(W_n(-))$ on affine $R_n$-schemes.    By the previous proposition it preserves \'etaleness. 
\end{proof}

\begin{lemma}\label{l.closed}
	Let $f\colon X\to Y$ be a closed immersion of $R_0$-schemes. Then $\Jbo f\colon \Jbo X\to \Jbo Y$ is a closed immersion of sheaves in $ \Sp{R_n}$, i.e., its base change along any map $T\to \Jbo Y$ with $T$ an $R_n$-scheme is a closed immersion of schemes.  
\end{lemma}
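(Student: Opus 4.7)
The plan is to reduce to the case where $T = \Spec C$ is affine (we may do so because being a closed immersion of schemes is Zariski-local on the target), and then describe $\Jbo X \times_{\Jbo Y} T$ explicitly as a closed subscheme of $T$.

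First, the given map $T \to \Jbo Y$ corresponds, via the defining formula $\Jbo Y(C) = Y(W_n(C))$, to a scheme morphism $\xi \colon \Spec W_n(C) \to Y$. Since $f$ is a closed immersion, the fibre product $\xi^{-1}(X) := \Spec W_n(C) \times_Y X$ is a closed subscheme of the affine scheme $\Spec W_n(C)$, hence equals $V(J)$ for some ideal $J \subset W_n(C)$.

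Next, for any $R_n$-algebra $B$, unwinding the definitions shows that a $B$-point of $\Jbo X \times_{\Jbo Y} T$ is an $R_n$-algebra homomorphism $\beta \colon C \to B$ such that the composite $\Spec W_n(B) \to \Spec W_n(C) \stackrel{\xi}{\to} Y$ factors through $X$ (the lift to $X$ being unique as $f$ is a monomorphism). Since $X$ is cut out of $\Spec W_n(C)$ by $J$, this factoring holds if and only if the extension $W_n(\beta)(J)\cdot W_n(B)$ is the zero ideal in $W_n(B)$, equivalently $W_n(\beta)(j) = 0$ for every $j \in J$. By functoriality of $W_n$ on the underlying set $C^{n+1}$, the map $W_n(\beta)$ acts coordinatewise, so $W_n(\beta)(j_0,\dots,j_n) = (\beta(j_0),\dots,\beta(j_n))$, and the vanishing condition becomes $\beta(j_k) = 0$ for all $j=(j_0,\dots,j_n) \in J$ and all $0 \leq k \leq n$.

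Finally, let $K \subset C$ be the ideal generated by the coordinates $\{\,j_k : j \in J,\ 0 \leq k \leq n\,\}$. The above is exactly the condition that $\beta$ factor through $C/K$, so $\Jbo X \times_{\Jbo Y} T$ is represented by the closed subscheme $\Spec(C/K) \hookrightarrow T$, as required. The only delicate point is the coordinatewise action of $W_n$ on ring maps, but this is immediate from the construction of the Witt ring structure on $B^{n+1}$ via universal polynomials in the coordinates recalled in Section~\ref{s.pityp}.
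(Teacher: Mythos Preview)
Your proof is correct and more elementary than the paper's. Both arguments begin by reducing to $T=\Spec C$ affine and passing to the corresponding map $\xi\colon \Spec W_n(C)\to Y$, but then diverge. The paper factors $T\to \Jbo Y$ through the unit map $T\to \Jbo(\Spec W_n(C))$, invokes that $\Jbo$ preserves fibre products to identify $\Jbo X\times_{\Jbo Y}\Jbo(\Spec W_n(C))$ with $\Jbo(X')$ for $X'=X\times_Y\Spec W_n(C)$, and then appeals to Corollary~\ref{BoBua} together with the explicit presentation~\eqref{JnA} of Buium's jet algebras to see that $\Jbo(X')\to\Jbo(\Spec W_n(C))$ is a closed immersion of affine schemes. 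You instead compute the functor of points of $\Jbo X\times_{\Jbo Y}T$ directly, using only the definition $\Jbo X(B)=X(W_n(B))$ and the fact that $W_n$ acts coordinatewise on ring maps, and exhibit the representing closed subscheme $\Spec(C/K)$ by hand. Your route avoids any dependence on the adjunction Theorem~\ref{t.adjunction} and on the jet-algebra machinery, so it is logically lighter; the paper's route is more structural and makes visible that the base-changed closed immersion is itself a jet space of the pulled-back closed immersion $X'\hookrightarrow\Spec W_n(C)$.
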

\begin{proof}
	As in the scheme theoretic case,
	it is sufficient to consider the case where $T$ is affine. 
	Let $T=\Spec C$ and consider a map $g\colon T\to \Jbo Y$, i.e.  $g\in \Jbo Y(C)$ and let $g'\colon \Spec W_n(C)\to Y$ be the corresponding map, i.e., the corresponding section in $Y(W_n(C))$. The map of functors $g$ factors as $T\to \Jbo (\Spec W_n(C)) \stackrel{\Jbo g'}{\to } \Jbo Y$ where the first map is the one associated to the identity on $\Spec W_n(C)$ via the adjunction \eqref{adjbx}. Hence it suffices to prove that the projection $\Jbo X\times_{\Jbo Y} \Jbo (\Spec W_n(C)) \to \Jbo (\Spec W_n(C))$ is a closed immersion of schemes.  Let $X'=X\times_Y\Spec W_n(C)$; it is an affine closed subscheme of $\Spec W_n(C)$ by hypothesis. Hence $\Jbo X'\to \Jbo (\Spec W_n(C))$ is a closed immersion by Corollary \ref{BoBua} and the explicit description of Buium jet spaces. Since the functor $\Jbo \ $ preserves fiber products, $\Jbo X'= \Jbo X\times_{\Jbo Y} \Jbo (\Spec W_n(C))$ and the proof is completed. 
\end{proof}

\subsection{Representability of algebraic jet spaces }
\label{sub.rep}
\begin{theorem}
	\label{S03}
	If $X$ is an $R_0$-scheme then $\Jbo X$ is an $R_n$-scheme for all $n$.
\end{theorem}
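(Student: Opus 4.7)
My plan is to reduce to the affine case via a Zariski open cover of $X$ and then glue, using the representability result of Corollary \ref{BoBua} together with the open- and closed-immersion preservation results of Corollary \ref{cor.open} and Lemma \ref{l.closed}. So first I would fix an affine open cover $X=\bigcup_{i\in I} U_i$ by $R_0$-schemes $U_i=\Spec A_i$. By Corollary \ref{BoBua}, each $\Jbo{U_i}\cong \Spec J_n(A_i)$ is representable by an affine $R_n$-scheme. By Corollary \ref{cor.open}, each natural map $\Jbo{U_i}\to \Jbo{X}$ is an open immersion in $\Sp{R_n}$. Since the functor $\Jbo{\cdot}$ is a right adjoint (a consequence of Theorem \ref{t.adjunction} extended pointwise to sheaves), it preserves fiber products, so the overlaps are representable by affine open subschemes
\[
\Jbo{U_i}\times_{\Jbo{X}}\Jbo{U_j}\;\cong\;\Jbo{U_i\cap U_j},
\]
and these form the gluing data one expects.

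The crux is to verify that the family $\{\Jbo{U_i}\to \Jbo{X}\}_{i\in I}$ is actually a covering in the \'etale topology on $\Sp{R_n}$, so that a scheme can be assembled from the $\Jbo{U_i}$ and these overlaps. Concretely, given any $R_n$-algebra $B$ and any section $s\in \Jbo{X}(B)=X(W_n(B))$, represented by a morphism $\tilde s\colon \Spec W_n(B)\to X$, I need to produce an \'etale (in fact Zariski) cover $\{\Spec B_k\to \Spec B\}$ such that each pullback $\tilde s_k\colon \Spec W_n(B_k)\to X$ factors through a single $U_{i(k)}$. The preimages $\tilde s^{-1}(U_i)$ already give a Zariski open cover of $\Spec W_n(B)$, so what is needed is that this cover descends along the ring map $W_n(B)\to B$ (projection onto the zeroth Witt component).

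The key input for this descent is the compatibility between the Zariski topologies on $\Spec B$ and $\Spec W_n(B)$: the surjection $W_n(B)\to B$ induces a universal homeomorphism $\Spec B\to \Spec W_n(B)$ (its kernel $VW_{n-1}(B)$ being generated by elements whose squares lie in $\pi W_n(B)$, and Borger's analysis \cite[Cor.~6.4]{bor11a} of nilpotent ideals under $W_n$ applied pointwise in primes), together with the compatibility of Witt vectors with principal localization, i.e.\ $W_n(B_f)\cong W_n(B)[1/g_f]$ for a suitable $g_f\in W_n(B)$ built from the Teichm\"uller lift and Frobenius iterates of $[f]$. These two facts let me refine $\{\tilde s^{-1}(U_i)\}$ to a principal open cover coming from elements $f_1,\dots,f_r\in B$ generating the unit ideal, with each $\Spec W_n(B_{f_k})$ mapping into a single $U_{i(k)}$.

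Once the covering property is in hand, the gluing is standard: the open immersions $\Jbo{U_i}\hookrightarrow \Jbo{X}$ of affine $R_n$-schemes with scheme-theoretic pairwise intersections $\Jbo{U_i\cap U_j}$ satisfying the cocycle condition (automatic from the functoriality of $\Jbo{\cdot}$) assemble into an $R_n$-scheme representing $\Jbo{X}$. The hard step is plainly the fourth: establishing that Zariski opens of $\Spec W_n(B)$ descend to Zariski opens of $\Spec B$ in a way compatible with a chosen section. The generality of our base prolongation sequence $R_*$ (as opposed to the constant sequence $\cO_*$ in \cite{bor11b}) introduces no new difficulty here, because the Witt vectors $W_n(B)$ are constructed from $B$ as an $\cO$-algebra, independently of the $R_n$-algebra structure invoked in Corollary \ref{proluniv3}; only the resulting $R_0$-algebra structure on $W_n(B)$ is affected, not its underlying ring nor its spectrum.
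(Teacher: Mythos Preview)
Your argument has a genuine gap at the covering step. The claim that the projection $W_n(B)\to B$ induces a universal homeomorphism $\Spec B\to \Spec W_n(B)$ is \emph{false} in general: the kernel $VW_{n-1}(B)$ is nilpotent only when $\pi$ is nilpotent in $B$. Indeed, $V(x)^2=\pi V(x^2)$ says nothing about nilpotence of $V(x)$ unless $\pi$ is nilpotent; and when $\pi$ is a unit in $B$, the ghost map identifies $W_n(B)\cong B^{n+1}$, so $\Spec W_n(B)$ has $n+1$ connected components while $\Spec B$ may have one. Consequently the opens $\Jbo U_i$ do \emph{not} cover $\Jbo X$ in general. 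For a concrete failure, take $R_*=\cO_*$, $X=\mathbb P^1_{\cO}$ with the standard cover $U_0=\Spec\cO[t]$, $U_1=\Spec\cO[t^{-1}]$, and $B=K=\mathrm{Frac}(\cO)$: a point of $\Jbo X(K)=\mathbb P^1(K^{n+1})$ is an $(n+1)$-tuple of $K$-points of $\mathbb P^1$, and the tuple $(0,\infty,0,\dots,0)$ lies in neither $\Jbo U_0(K)$ nor $\Jbo U_1(K)$, with no further Zariski refinement possible over a field. This is precisely why equation \eqref{non-local} involves $s\phi(s)\cdots\phi^n(s)$ rather than $s$, and why the paper's Corollary \ref{cor.nm} (which \emph{is} your gluing argument) is stated only over $\Sn_m$.

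The paper instead bypasses gluing altogether: it observes that any $R_0$-scheme $X$ sits as a closed subscheme of $X'=X\times_{\cO} S^{(0)}$ (the latter being a base change of an $\cO$-scheme), invokes Borger's absolute representability theorem \cite[Theorem 12.1]{bor11b} to get $\Jbo X'$ representable, and then uses Lemma \ref{l.closed} on closed immersions to conclude that $\Jbo X$ is a closed subscheme of $\Jbo X'$. So the hard representability input is Borger's theorem over $\cO$, not a direct Zariski gluing.
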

\begin{proof}
	We proceed by steps. Note that the fixed prolongation sequence $R_*$ lies above the constant prolongation $\cO_*$. 
	
	First assume that $X=Y\times_{\cO} S^{(0)}$, i.e. it is obtained by base change from  an $\cO$-scheme $Y$.  By definition 
	\[\Jbo X(B)=X(W_n(B))=Y(W_n(B))=\Jbo Y(B)= (W_{n*}( Y)\times_\cO \Sn)(B)
	\]  for any $R_n$-algebra $B$ where $W_n(B)$ is endowed with its 
	$R_0$-algebra structure and $W_{n*}(Y)$ is the absolute arithmetic jet space
	as in \cite{bor11b}.
	
	By \cite[Theorem 12.1]{bor11b}  $W_{n*}(Y)$ is represented by an $\cO$-scheme. 
	Hence $\Jbo X$ is represented by an $R_n$-scheme.
	
	In the general case, let $X$ be any $R_0$-scheme. In particular it is a $\cO$-scheme and we can consider  the closed immersion
	\[
	j\colon X\longrightarrow X':= X\times_\cO S^{(0)}=X\times_{R_0} (S^{(0)}\times_\cO S^{(0)})
	\] induced by the multiplication map $R_0\otimes_\cO  R_0\to R_0, a\otimes b\mapsto ab$.
	
	By the previous step  $\Jbo X'$ is representable by an $R_n$-scheme. Since $\Jbo(j)\colon \Jbo X\to \Jbo X'$ is a closed immersion by Lemma \ref{l.closed},  $\Jbo X$ is representable by an $R_n$-scheme as well. 
\end{proof}

The scheme $J^nX$ is called {\it algebraic $(\pi,q)$-jet space of level $n$}, or simply {\it jet space of level $n$, associated to} $X$.  

Let us use the same notation for the restriction $\Jbo \colon \mathrm{Sch}/{R_0}\to \mathrm{Sch}/{R_n}$ from the category of $R_0$-schemes to the category of $R_n$-schemes. This latter functor has a left adjoint, denoted by $W_n^*$,  which associates to any  $R_n$-scheme $Y$ the $W_n(R_n)$-scheme  $W_n^*(Y)$  constructed by gluing $\Spec W_n(A)$ as $\Spec A$ varies among the open subschemes of $Y$. 
For the construction of $W_n^*(Y)$ see also \cite{lz}, \cite{bor11b}.
Due to Corollary \ref{proluniv3} it is an $R_0$-scheme. 

\subsection{The $\pi$-fiber}

For a sheaf $X$ in $ \Sp{R_0}$ let $X^{(n)}$ denote its pullback $\Sn\times_{S^{(0)}}X$ to $ \Sp{R_n}$. Note that there is a canonical morphism of sheaves in  $ \Sp{R_n}$
\begin{equation}\label{jtox} \Jbo X   \to X^{(n)}\end{equation}
which is given by the maps $X(W_n(B))\to X(B)$ induced by the projection $W_n(B)\to B$ for every $R_n$-algebra $B$ (cf. the co-ghost map $\kappa_0$ in \cite[10.6.9]{bor11b}).

By \eqref{jtox} and functoriality of $J^n$ there exists a morphism
\begin{equation}\label{eq.jboxy} \Jbo X  \longrightarrow  \Jbo Y \times_{Y^{(n)}} X^{(n)}\end{equation}
in $\Sp{R_n}$. The following proposition states that this map becomes an isomorphism when working modulo powers of $\pi$.

\begin{proposition}
	\label{S01}
	Let $f\colon X \map Y$ be a map in $\Sp{R_0}$ which is 
	formally \'{e}tale.
	Then  the map 
	\[ \Sn_{m} \times_{\Sn} \Jbo X  \longrightarrow  \Sn_{m}
	\times_{\Sn} \Jbo Y \times_{Y^{(n)}} X^{(n)}\]
	is an isomorphism of sheaves over $\Sn_m$ for all $n$ and $m$.    
\end{proposition}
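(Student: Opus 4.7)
The plan is to evaluate the proposed sheaf isomorphism on affine test schemes $T = \Spec B$ where $B$ is an $R_n$-algebra with $\pi^m B = 0$. Equipping $W_n(B)$ with its $R_0$-algebra structure from Corollary \ref{proluniv3} and unwinding, the map on $B$-points becomes
\[
X(W_n(B)) \longrightarrow X(B)\times_{Y(B)} Y(W_n(B)),
\]
where the projection $W_n(B)\to B$ and composition with $f$ provide the two legs. This is exactly the comparison map appearing in the formal étaleness criterion applied to the closed immersion $\Spec B \hookrightarrow \Spec W_n(B)$. Hence, provided this immersion is defined by a nilpotent ideal, the hypothesis on $f$ will deliver the bijectivity directly.

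Thus everything reduces to the following lemma: if $\pi^m B = 0$, then the ideal $I_n := \ker(W_n(B) \to B)$ is nilpotent. I would argue by induction on $n$, the case $n=0$ being trivial. For $n \geq 1$, the truncation $W_n(B) \to W_{n-1}(B)$ yields a short exact sequence
\[
0 \longrightarrow J_n \longrightarrow I_n \longrightarrow I_{n-1} \longrightarrow 0,
\]
where $J_n := \ker(W_n(B) \to W_{n-1}(B))$ is naturally identified with the image $V^n(B) \subseteq W_n(B)$ via $b \mapsto (0,\dots,0,b)$. Iterating identity \eqref{FV2} gives $x \cdot V^n(y) = V^n(F^n(x)\, y)$, so $J_n$ is an ideal. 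Combining \eqref{FV2} and \eqref{FV-pi} yields $V(a) V(b) = \pi V(ab)$, and together with $\pi V(a) = V(\pi a)$ this iterates to
\[
V^n(a) V^n(b) = \pi^n V^n(ab).
\]
An easy induction on $k$ then gives $J_n^k \subseteq V^n\bigl(\pi^{n(k-1)} B\bigr)$, which vanishes as soon as $n(k-1) \geq m$; hence $J_n$ is nilpotent. Combined with the inductive hypothesis that $I_{n-1}^\ell = 0$ in $W_{n-1}(B)$ (so $I_n^\ell \subseteq J_n$), this yields nilpotence of $I_n$.

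Given the lemma, applying the formally étale property of $f$ to $\Spec B \hookrightarrow \Spec W_n(B)$ immediately produces the bijection, completing the argument. I expect the main obstacle to lie precisely in the Witt-vector lemma: although each identity used is elementary, one must simultaneously identify $J_n$ with $V^n(B)$ as an ideal, establish the product formula $V^n(a)V^n(b) = \pi^n V^n(ab)$, and extract from these a uniform nilpotence bound. This requires careful bookkeeping with the interactions among Frobenius, Verschiebung, and scalar multiplication by $\pi$, but no genuinely new input beyond the identities already recorded in Section~\ref{subsec-witt-operations}.
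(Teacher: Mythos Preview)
Your proposal is correct and follows essentially the same approach as the paper: reduce to $B$-points with $\pi^m B=0$, identify the map with the formal \'etaleness criterion for $\Spec B\hookrightarrow \Spec W_n(B)$, and invoke nilpotence of the kernel ideal. The only difference is cosmetic: the paper dispatches nilpotence in one line by citing \eqref{FV-pi} and \eqref{FV2}, while you spell out an inductive proof via the filtration $J_n\subseteq I_n$; both arguments rest on the same identities.
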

\begin{proof}  Let $B$ be an $(R_n/\pi^mR_n)$-algebra. Then   \eqref{eq.jboxy} gives the following canonical map
	\[ \Jbo X (B) = X(W_n(B)) \longrightarrow Y(W_n(B)) \times_{Y(B)} X(B) = 
	(\Jbo Y  \times_{Y^{(n)}} X^{(n)})(B)\]
	on $B$-sections. 
	Now we will construct an inverse to the above map. Given an element 
	$g \in Y(W_n(B)) \times_{Y(B)} X(B)$   this is determined by a pair of sections $(g', g'')$ making the following diagram commute 
	\[\xymatrix{
		\Spec B \ar@{^{(}->}[d] \ar[r]_{g'} & X \ar[d]^f \\
		\Spec W_n(B) \ar[r]_-{g''} & Y
	}\]
	Note that the left vertical arrow is a nilpotent thickening of $\Spec B$ since  $\pi$ is nilpotent in $B$ and hence the ideal $V(W_n(B))$ is nilpotent by \eqref{FV-pi} and \eqref{FV2}. 
	Since $f\colon  X \map Y$ is formally \'{e}tale, there is a unique lift 
	$\tilde{g}\colon  \Spec W_n(B) \map X$  of $g''$  and we are done.
\end{proof}

As a   consequence, one can get that on the level of formal schemes the jet functors respect open affine coverings.

\begin{corollary}\label{cor.nm}
	Let  $   X$ be an $S_0$-scheme. Then $ \Sn_m \times_{\Sn}\Jbo X$ is an $\Sn_m$-scheme for any $n$ and $m$.   Further if  $\{U_i\}_{i\in I}$ is an affine open covering of $X$  then  $ \{\Sn_m \times_{\Sn}\Jbo U_i\}_{i\in I}$ is an affine  open covering of  $ \Sn_m \times_{\Sn}\Jbo X$. As  consequences the formal $\pi$-completion of $\Jbo X$ is isomorphic to $\Jbu X $ and the special fiber of $\Jbo X$  is covered by the special fibers of the open subschemes $\Jbo U_i$'s.
\end{corollary}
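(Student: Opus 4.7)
The first assertion is immediate: by Theorem \ref{S03} the sheaf $\Jbo X$ is representable by an $R_n$-scheme, and base change $S^{(n)}_m\times_{\Sn}(-)$ preserves the property of being a scheme, so $\Sn_m\times_{\Sn}\Jbo X$ is an $\Sn_m$-scheme. The bulk of the work is therefore to identify this base change with an explicit open affine covering.

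The plan for the covering is to apply Proposition \ref{S01} to each open immersion $j_i\colon U_i\hookrightarrow X$. Open immersions are formally \'etale (and locally of finite presentation), so Proposition \ref{S01} yields a canonical isomorphism
\[
\Sn_m\times_{\Sn}\Jbo U_i \ \longisomap\  \Sn_m\times_{\Sn}\bigl(\Jbo X \times_{X^{(n)}} U_i^{(n)}\bigr).
\]
Each $U_i$ is affine, so by Corollary \ref{BoBua} the sheaf $\Jbo U_i$ is represented by an affine $R_n$-scheme, and hence the left-hand side is an affine $\Sn_m$-scheme. On the right-hand side, since $j_i$ is an open immersion its pullback $U_i^{(n)}\to X^{(n)}$ is also an open immersion, and pulling back the canonical map $\Jbo X\to X^{(n)}$ of \eqref{jtox} shows that $\Jbo X \times_{X^{(n)}} U_i^{(n)}\to \Jbo X$ is an open immersion of $R_n$-schemes. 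Because the $U_i^{(n)}$ cover $X^{(n)}$ and this map is a morphism of sheaves, the preimages cover $\Jbo X$; base-changing along $\Sn_m\to \Sn$ then gives the desired affine open covering of $\Sn_m\times_{\Sn}\Jbo X$.

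For the consequence on the formal $\pi$-completion, I would take the formal limit $\varprojlim_m$ of the coverings produced above. On each affine piece $U_i=\Spec A_i$, Corollary \ref{BoBua} gives $\Sn_m\times_{\Sn}\Jbo U_i=\Spec(J_nA_i/\pi^m)$, so the formal limit is $\Spf \widehat{J_nA_i}$, which is precisely Buium's $\Jbu{\Spf\hat A_i}$. These glue by functoriality together with the gluing compatibility \eqref{local} for Buium's construction, and one verifies that the resulting gluing matches that of $\Jbu X$ in \eqref{Jcover}. Finally, the statement on special fibers is simply the case $m=1$ of the covering already established.

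The only genuinely non-routine step is the application of Proposition \ref{S01}: one has to check that open immersions are formally \'etale in the sheaf-theoretic sense of Section \ref{alg.js}, which is standard, and then verify that the fiber product $\Jbo X\times_{X^{(n)}} U_i^{(n)}$ really is an open subscheme of $\Jbo X$ pulled back from $X^{(n)}$. Once that identification is made, the covering, affineness, and the two corollaries follow formally, so I do not expect any serious obstacle.
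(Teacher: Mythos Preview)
Your argument is correct, and it takes a somewhat different route from the paper's. The paper does not invoke Proposition~\ref{S01}; instead it uses Corollary~\ref{cor.open} directly to see that each $\Jbo U_i\to\Jbo X$ is an open immersion (already before reducing modulo $\pi^m$), and then argues the covering property by hand: given a section $s\in X(W_n(B))$ with $\pi^m B=0$, the map $\Spec W_n(B)\to\Spec B$ is a homeomorphism, so one may localize on $B$ until $s$ factors through some $U_i$. Your approach replaces this localization step by pulling everything back from $X^{(n)}$ via the co-ghost map \eqref{jtox}: Proposition~\ref{S01} identifies $\Sn_m\times_{\Sn}\Jbo U_i$ with the preimage of $U_i^{(n)}$, so covering and openness come for free from the scheme map $\Jbo X\to X^{(n)}$. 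Both arguments rely on Theorem~\ref{S03} and Corollary~\ref{BoBua} in the same way; yours is more conceptual and makes the role of the co-ghost map explicit, while the paper's is more elementary and shows directly why the hypothesis $\pi^m=0$ is needed (it is exactly what makes $\Spec W_n(B)\simeq\Spec B$ as topological spaces). One small point: when you say ``the preimages cover $\Jbo X$'' you are implicitly using that $\Jbo X$ is already a scheme (Theorem~\ref{S03}) so that preimages of an open cover under the scheme morphism $\Jbo X\to X^{(n)}$ form an open cover; this is fine, but worth making explicit.
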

\begin{proof}
	Representability by a scheme is clear by Theorem \ref{S03}.	By Corollaries \ref{cor.open} and \ref{BoBua} we know that the canonical map $ \Sn_m \times_{\Sn}\Jbo U_i \to \Sn_m\times_{\Sn}\Jbo X$  are open immersions with $  \Sn_m \times_{\Sn}\Jbo U_i$ affine schemes. Further,  the open immersions $U_i\times_X U_j\to U_i$ induce corresponding open immersions on the $n$-th jet spaces restricted to $\Sn_m$. We can then glue  the affine schemes $ \Sn_m \times_{\Sn}\Jbo U_i $ getting an $\Sn_m$-scheme $\bar J$. It remains to check that the induced map $\bar J\to \Sn_m \times_{\Sn}\Jbo X$ is an isomorphism. Given any section $s\in \Jbo(X)(B)=X(W_n(B))$ with $B$ an $R_0$-algebra, up to localization on $W_n(B)$ (or equivalently on $B$, since the corresponding affine spectra are homeomorphic) we may assume that $s$ comes from a section $s_i\colon \Spec (W_n(B))\to U_i$. Since $ U_i(W_n(B))=\Jbo(U_i)(B)\subset \bar J(B)$,  the section $s$ lies in $\bar J(B)$  and we are done.
\end{proof}

Let $X$ be an $R_0$-scheme, let $\ov{\Jbo X}$ denote the special fiber of  $\Jbo X$, i.e.,\[\ov{\Jbo X}:= \ov{\Jbo X}  \times_{\Spec \Ou} \Spec \cO/(\pi)=\ov{\Jbo X}  \times_{S^{(n)}}  S_1^{(n)}\] and let
$\ov{J_{Bu} X}$ denote the special fiber of the $\pi$-formal scheme $\Jbu X$. 

\begin{corollary}
	\label{BoBuSp}
	Let $X$ be an $R_0$-scheme. 	For all $n\geq 0$ we have $\ov{\Jbo X} \simeq \ov{J_{Bu}^n X}$.
\end{corollary}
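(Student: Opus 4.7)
The plan is to derive this as an immediate consequence of Corollary \ref{cor.nm}. The two special fibers $\overline{J^n X}$ and $\overline{J^n_{Bu}X}$ are by definition the base changes of $J^nX$ and $J^n_{Bu}X$ along $S^{(n)}_1 \to S^{(n)}$, i.e., the reductions modulo $\pi$. Since reduction modulo $\pi$ (or any power $\pi^m$) is insensitive to $\pi$-adic completion, it suffices to identify the mod-$\pi$ reductions of $J^nX$ and $\widehat{J^nX}$, the latter being $J^n_{Bu}X$ by the last assertion of Corollary \ref{cor.nm}.

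First, I would recall from Corollary \ref{cor.nm} that for each $m\geq 1$ the sheaf $S^{(n)}_m \times_{S^{(n)}} J^nX$ is representable by an $S^{(n)}_m$-scheme, and that passing to the limit over $m$ one obtains the isomorphism of $\pi$-formal schemes $\widehat{J^nX} \simeq J^n_{Bu}X$. In particular, for each fixed $m$, the reduction modulo $\pi^m$ of $J^nX$ coincides with the reduction modulo $\pi^m$ of $J^n_{Bu}X$.

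Now I specialize to $m=1$. By definition,
\begin{equation*}
\overline{J^nX} \;=\; J^nX \times_{S^{(n)}} S^{(n)}_1 \;\simeq\; \bigl(\widehat{J^nX}\bigr) \times_{S^{(n)}} S^{(n)}_1 \;\simeq\; J^n_{Bu}X \times_{S^{(n)}} S^{(n)}_1 \;=\; \overline{J^n_{Bu}X},
\end{equation*}
where the middle isomorphism uses that $\pi$-adic completion is an isomorphism modulo any power of $\pi$, and the next isomorphism is the identification supplied by Corollary \ref{cor.nm}. Using the open covering statement in Corollary \ref{cor.nm} one can, if desired, also verify this locally: if $\{U_i\}$ is an affine open cover of $X$, then the special fibers $\overline{J^n U_i}$ and $\overline{J^n_{Bu}U_i}$ agree by the affine case (Corollary \ref{BoBua}), and these assemble compatibly on both sides.

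There is essentially no real obstacle to overcome here: the genuine content — representability of $J^nX$, compatibility of its mod-$\pi^m$ reductions with affine open covers, and the comparison with Buium's construction after $\pi$-adic completion — has already been established in Theorem \ref{S03} and Corollary \ref{cor.nm}. The only minor point to spell out is the standard fact that the special fiber of an $R_n$-scheme is unchanged upon $\pi$-adic completion, which is immediate from the identification $R_n/\pi R_n \simeq \widehat{R_n}/\pi \widehat{R_n}$.
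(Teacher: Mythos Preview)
Your proposal is correct and follows essentially the same approach as the paper: the paper's one-line proof cites \eqref{Jcover}, Corollary \ref{BoBua}, and Corollary \ref{cor.nm}, which is exactly the content you unpack in more detail. Your added remark that the special fiber is unchanged by $\pi$-adic completion makes explicit the step the paper leaves implicit.
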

\begin{proof}
	This follows from \eqref{Jcover} and Corollaries \ref{BoBua}  and \ref{cor.nm}.
\end{proof}

\section{Jet spaces and Greenberg transform}
In this section, we let $W(B) $ denote the ring of $p$-typical Witt vectors with coefficients in a ring $B$ (i.e, $W_{\pi,q}(B)$ with $\pi=p, q=p$). In addition, let $\fR$ be a fixed local artinian ring  with perfect residue field $k'$ of characteristic $p$. We recall the definition of the Greenberg algebra and the Greenberg transform.

\subsection{Greenberg algebra and Greenberg transform}
The {\it Greenberg algebra} associated to  $\fR$,  is the affine $k'$-ring scheme $\cR$ which represents the fpqc sheaf associated to the presheaf 
\[(\text{affine $k'$-schemes} )\to (\fR \text{-algebras}), \qquad \Spec(B)\mapsto W(B)\otimes_{W(k')} \fR ; \]
see \cite{gre, lip, bga} for an explicit description. There exists a canonical surjective homomorphism $\cR (B)\to W(B)\otimes_{W(k')} \fR$ which is an isomorphism if $RC$ is semiperfect, i.e., if the absolute Frobenius on $B$ is surjective \cite[Corollary A.2]{lip}.

\begin{example}\label{ex.w}
	If $\fR=W_m(k')$, then $\cR=\mathbb W_{m,k'}$ the $k'$-ring scheme of  $p$-typical  Witt vectors of   length  $m+1$, \cite[proof of Prop. A.1]{lip}. 
\end{example}

Let now $Z=\Spec(A)$ be an affine $\fR$-scheme and consider the functor 
\begin{equation}
\label{eq.furn}(\text{affine $k'$-schemes} )\to \text{({\rm Sets})}, \qquad \Spec(B)\mapsto Z(\cR(B)):=\Hom_\fR(\Spec(\cR(B)),Z). \end{equation}
By \cite{gre} and \cite[Proposition 6.4]{bga}, the above functor is representable by an affine scheme $\gr^{\fR}(Z)=\Spec(\gr^{\fR}(A))$, called {\it the Greenberg transform} or {\it Greenberg realization of $Z$} and hence
\begin{equation}\label{eq.adj1}
\Hom_{\fR}\big(\Spec \cR(B),\Spec(A)\big)\simeq\Hom_{ k'}\big(\Spec(B), \Spec(\gr^{\fR}(A))\big),
\end{equation}
for any $k'$-algebra $B$. In particular there is a natural bijection
\begin{equation}\label{eq.adj2}
\Hom_{\fR} \big(A,\cR(B)\big)\simeq\Hom_{ k'}\big( \gr^{\fR}(A), B\big).
\end{equation}

If $\fR=W_m(k')$ then the above adjunction gives for any $k'$-algebra $B$
\begin{equation}\label{eq.adj3}
\Hom_{W_m(k')} \big(A,W_m(B)\big)\simeq\Hom_{ k'}\big( \gr^{W_m(k')}(A), B\big).
\end{equation}

More generally, for any $k'$-scheme $Y$ one can glue $\Spec \cR(B)$ as $\Spec B$ varies among the open subschemes of $Y$ thus getting a scheme $h^\fR(Y)$ over $\fR$. The functor $h^\fR$ from the category of $k'$-schemes to the category of $\fR$-schemes admits a right adjoint $\gr^{\fR}$ so that there  are natural bijections
\begin{equation*}
\Hom_{\fR} \big(h^\fR Y,X\big)\simeq\Hom_{ k'}\big(Y, \gr^{\fR}(X)\big),
\end{equation*}
for any $k'$-scheme $Y$ and any $\fR$-scheme $X$; see \cite[\S 6]{bga}. By construction any affine open covering $\{U_i\}$ of $X$ induces an affine open covering  $\{\gr^{\fR}(U_i)\}$ of $\gr^{\fR}(X) $. 
\begin{example}\label{ex.w2}
	If    $\fR=W_m(k')$, then $h^\cR(Y)$ coincides with the scheme  $W_{m}^*(Y)$  introduced in \S \ref{sub.rep}. 
\end{example}

\subsection{Comparison between jet spaces and Greenberg transform}

Let $(\cO',\pi',q')$ be another triple with  $\cO'$ a Dedekind domain above $\cO$, $\pi'$ a generator of a fixed prime ideal $\mathfrak p'\subset \cO'$ such that $\pi\in (\pi')^e\cO'$, $q'=q^r$ the cardinality of the residue field $\cO'/\mathfrak p'$.
We write  $W(B)$ for the ring  of $p$-typical Witt vectors  with coefficients in $B$, and $W_{\pi,q}(B)$  for the $\pi$-typical variant constructed in Section \ref{s.pityp} from the data $(\cO,\pi,q)$. 
Similarly for the finite length rings $W_m(B)$ and $W_{\pi,q,m}(B)$.
For any $\cO'$-algebra $B$ there exists a natural homomorphism of $\cO$-algebras (cf. \cite[Proposition 1.2]{dri})
\begin{equation*}\label{eq.um0}
u\colon W_{\pi,q}(B)\to W_{\pi',q'}(B)
\end{equation*} 
determined by the conditions
\[u([b])=[b], \qquad u(F^r(x))=F(u(x)), \qquad u(V(x))=\frac{\pi}{\pi'}V(u(F^{r-1}(x))),\] 
for any $b\in B$ and $x\in W_{\pi,q}(B)$; here $F$ denotes Frobenius in both ring of Witt vectors and similarly for the Verschiebung $V$. Now, for any $\FF_q$-algebra $B$ there are  Drinfeld maps
\begin{equation}\label{eq.um}
u\colon W(B) \to W_{p,q}(B), \qquad  u_m\colon W_m(B)\to W_{p,q,m}(B),
\end{equation} 
which are isomorphisms if $B$ is perfect but not in general  \cite[1.2.1]{ff}; in fact, if $q=p^r, r\neq 1$ and $0\neq b\in B$ satisfies $ b^p=0$, then $u(V[b])=V[b^{p^{r-1}}]=0$.
In the general case  $e>1$, $u$ induces homomorphisms
\begin{equation}\label{eq.um1}
u_{m-1}\colon W_{\pi,q,m-1}(B)\to W_{\pi',q',me-1}(B).
\end{equation}

\subsection{Greenberg realization}
We now show that the special fiber of  a $\pi$-jet space is   very close to a Greenberg realization, thus generalizing \cite[Thm. 2.10]{bui96}.

Let  $k=\FF_q$  and  let $\cO=W(\FF_q)[\pi]/(\pi^e+\dots) $ be a finite totally ramified extension of $W(\FF_q)$ of degree $e$ with maximal ideal $\pi\cO$. Let $\tilde k$ be any  (possibly infinite) perfect field containing  $\FF_q$ and
let  $\tilde\cO=W_{\pi,q}(\tilde k)=W(\tilde k)[\pi]/(\pi^e+\dots)$  be the totally ramified extension of $W(\tilde k)$ of degree $e$ with uniformizer $\pi$. 
Let $\fR=\tilde \cO/\pi^{me}\tilde \cO$ for $m\geq 1$ and  write $\gr_{me-1}$ for the Greenberg realization $\gr^\fR$ (thus shifting indices with respect to notation in \cite[\S 7]{bga}).

\begin{theorem}\label{t.grj} Let $X$ be an $\fR$-scheme, $\gr_{me-1}(X)$ the Greenberg realization of   $X$ with respect to $\fR$ and  $J^{me-1}X $ the arithmetic $(\pi,q)$-jet space of level $me-1$ of $X$. There is a natural morphism \[v\colon \gr_{me-1}(X)\to \ov {J^{me-1} X}:=J^{me-1} X\times_{\tilde \cO} \Spec (\tilde k)\] of $\tilde k$-schemes which induces an isomorphism on inverse perfections, i.e., for any perfect $\tilde k$-algebra $B$ the map $ \gr_{me-1}(X)(B)\to J^{me-1} X(B) $ is an isomorphism. Further
	\begin{itemize}
		\item[i)] If  $\cO=\Z_p$ and $\fR=W_{m-1}(\tilde k)$, then $v$ is an isomorphism.
		\item[ii)] If $e=1$ and $m=1$ then   $v$ may be identified with the identity on $X\times_{\fR} \Spec \tilde k$.
	\end{itemize}
\end{theorem}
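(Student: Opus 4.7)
The plan is to construct $v$ as a natural transformation of functors of points on $\tilde k$-algebras and then verify the isomorphism claims. For any $\tilde k$-algebra $B$, the Greenberg adjunction \eqref{eq.adj1} identifies $\gr_{me-1}(X)(B)=X(\cR(B))$, while by the definition of the algebraic jet space $\ov{J^{me-1}X}(B)=X(W_{\pi,q,me-1}(B))$, the target being viewed as an $\fR$-algebra via the identification $W_{\pi,q,me-1}(\tilde k)=\fR$. Hence it suffices to produce a natural $\fR$-algebra homomorphism $\alpha_B\colon \cR(B)\to W_{\pi,q,me-1}(B)$; applying the functor $X(-)$ then yields $v$ as a natural transformation, and representability of both sides by $\tilde k$-schemes promotes it to a morphism of schemes by Yoneda.

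To define $\alpha_B$, I compose the canonical surjection $\cR(B)\twoheadrightarrow W(B)\otimes_{W(\tilde k)}\fR$ (inherent in the definition of the Greenberg algebra) with the map induced by the Drinfeld-type homomorphism $u_{m-1}$ of \eqref{eq.um1}, specialized to the triples $(\cO,\pi,q)=(\Z_p,p,p)$ below and $(\cO',\pi',q')=(\tilde\cO,\pi,q)$ above. Since $p^m=0$ in $\fR$, the tensor product equals $W_{m-1}(B)\otimes_{W_{m-1}(\tilde k)}\fR$, and $u_{m-1}\colon W_{m-1}(B)\to W_{\pi,q,me-1}(B)$ is $W_{m-1}(\tilde k)$-linear by naturality, its value at $\tilde k$ being the natural inclusion $W(\tilde k)/p^m\hookrightarrow \tilde\cO/\pi^{me}=\fR$; hence it descends to the tensor product and supplies the second factor of $\alpha_B$. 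Functoriality in $B$ is inherited from every step, and the representability of $W_{\pi,q,me-1}$ (hence its being a sheaf) ensures that the presheaf-level map descends to the sheafified $\cR$.

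For the isomorphism on inverse perfections, when $B$ is perfect both constituents of $\alpha_B$ become isomorphisms: the Greenberg surjection is an isomorphism in the semiperfect case, and the Drinfeld map $u\colon W(B)\to W_{p,q}(B)$ is an isomorphism, so the extension $W(B)\otimes_{W(\tilde k)}\tilde\cO\isomap W_{\pi,q}(B)$ holds (one simply adjoins $\pi$ to the unramified Witt ring of a perfect residue), and reducing modulo $\pi^{me}$ gives $W(B)\otimes_{W(\tilde k)}\fR\isomap W_{\pi,q,me-1}(B)$. Thus $\alpha_B$ is an isomorphism on every perfect $\tilde k$-algebra, proving the main claim.

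For case (i), with $\cO=\Z_p$ (so $\pi=p$, $q=p$, $e=1$) and $\fR=W_{m-1}(\tilde k)$, Example \ref{ex.w} yields $\cR=\WW_{m-1,\tilde k}$, hence $\cR(B)=W_{m-1}(B)$ for every $\tilde k$-algebra $B$; moreover $W_{\pi,q,me-1}(B)=W_{m-1}(B)$ and $u_{m-1}$ is the identity, so $\alpha_B=\mathrm{id}$ and $v$ is an isomorphism. For case (ii), with $e=m=1$, $\fR=\tilde k$ and $me-1=0$, so $J^{0}X=X$ and both $\gr_0(X)$ and $\ov{J^0X}$ coincide with $X\times_\fR\Spec\tilde k$; here $\alpha_B$ reduces to $\mathrm{id}_B$, so $v$ is the identity. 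The main obstacle I anticipate is confirming that the two a priori different $\fR$-algebra structures on $W_{\pi,q,me-1}(B)$, the one induced from $W_{\pi,q,me-1}(\tilde k)=\fR$ and the one obtained via the Drinfeld truncation of $W_{m-1}(\tilde k)$ into $\fR$, coincide, so that $\alpha_B$ is genuinely an $\fR$-algebra homomorphism and not merely a $W_{m-1}(\tilde k)$-linear ring map.
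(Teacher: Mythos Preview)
Your overall strategy coincides with the paper's: both arguments produce $v$ by constructing, for every $\tilde k$-algebra $B$, an $\fR$-algebra map $\cR(B)\to W_{\pi,q,me-1}(B)$ out of Drinfeld's comparison homomorphism, and then applying $X(-)$; the perfection statement and the special cases (i), (ii) are then read off exactly as you do.

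There is, however, a real gap in your construction of $\alpha_B$. The step ``since $p^m=0$ in $\fR$, the tensor product equals $W_{m-1}(B)\otimes_{W_{m-1}(\tilde k)}\fR$'' is false for non-perfect $B$: what you obtain from $p^m\fR=0$ is
\[
W(B)\otimes_{W(\tilde k)}\fR \;=\; \bigl(W(B)/p^{m}W(B)\bigr)\otimes_{W_{m-1}(\tilde k)}\fR,
\]
and $W(B)/p^{m}W(B)\neq W_{m-1}(B)$ in general, since the kernel of the truncation $W(B)\to W_{m-1}(B)$ is $V^{m}W(B)$ whereas $p^{m}W(B)=V^{m}F^{m}W(B)\subsetneq V^{m}W(B)$ when $F$ is not surjective. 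Consequently the route ``surjection $\cR(B)\twoheadrightarrow W(B)\otimes_{W(\tilde k)}\fR$, then identify with $W_{m-1}(B)\otimes\fR$, then apply $u_{m-1}$'' does not literally assemble, and in particular your claim $\alpha_B=\mathrm{id}$ in case (i) cannot be checked along this path. The paper avoids this by using the \emph{direct} description of the Greenberg algebra for a free $W_{m-1}(\tilde k)$-module: since $\fR=\bigoplus_{i=0}^{e-1}W_{m-1}(\tilde k)\pi^{i}$, one has $\cR(B)=\bigoplus_{i=0}^{e-1}W_{m-1}(B)\pi^{i}$ for \emph{every} $\tilde k$-algebra $B$, and then $\tilde u\bigl(\sum w_i\pi^i\bigr)=\sum u(w_i)\pi^i$ with $u\colon W_{m-1}(B)\to W_{\pi,q,me-1}(B)$ the Drinfeld map. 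If you replace your two-step description of $\cR(B)$ by this identification, the rest of your argument (including the anticipated compatibility of $\fR$-structures, which is then immediate from naturality of $u$ at $B=\tilde k$) goes through unchanged.
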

\begin{proof} 
	Since the Greenberg realization of $X$ is determined by the Greenberg realization of an affine open covering  of $X$ and the same holds for the special fiber of the jet space by Corollary \ref{cor.nm}, we may work locally on $X$ and  assume that $X=\Spec A$ is affine. Let $\gr_i(A)$ denote the ring of global sections of $\gr_i(X)$.
	Assume first that $\cO=W(\FF_q), \tilde \cO=W(\tilde k), \pi=p, e=1$, $\fR=W_{m-1}(\tilde k)$.
	By Example \ref{ex.w} we have $\cR=\WW_{m-1,\tilde k}$, the ring scheme of $p$-typical Witt vectors of length $m$ over $\tilde k$. 	
	Note that, since $\tilde k$ is perfect, Drinfeld's map \eqref{eq.um} gives an isomorphism $W(\tilde k)\simeq W_{p,q}(\tilde k)$. By \eqref{eq.adj3} and \eqref{eq.adj0} for $R_*=\tilde \cO_*$ we get
	\begin{multline*}
	\Hom_{ \tilde k}\big( \gr_{m-1}(A), B\big)\simeq \Hom_{W_{m-1}(\tilde k)} \big(A,W_{m-1}(B)\big)\simeq \\
	\Hom_{W(\tilde k)} \big(A,W_{m-1}(B)\big)\to 
	\Hom_{W(\tilde k)} \big(A,W_{p,q,m-1}(B)\big)\simeq \\ 
	\Hom_{W(\tilde k)}(J_{m-1}A, B)\simeq
	\Hom_{ \tilde k}(J_{m-1}A\otimes_{W_{m-1}(\tilde k)}\tilde k, B),\end{multline*}
	for any $\tilde k$-algebra $B$, where the arrow in the middle is induced by Drinfeld's map  in \eqref{eq.um} $u_{m-1}\colon W_{m-1}(B) \to W_{p,q,m-1}(B)$.
	Note that $u_0$ is the identity on $B$ and $u_{m-1}$, $m\geq 2$, is the identity if $q=p$.  Hence the identity map on $\gr_{m-1}(A)$ produces a natural homomorphism of $\tilde k$-algebras $J_{m-1}A\otimes_{W_{m-1}(\tilde k)}\tilde k \to \gr_{m-1}(A)  $ which is an isomorphism  if $q=p$ or $m=1$. This proves i) and ii) for $\cO=\Z_p$.  Further, since $u_{m-1}$ is an isomorphism for $B$ perfect, it induces a morphism of schemes $ v\colon \gr_{m-1}(X)\to J^{m-1}(X)\times_{\tilde\cO} \Spec (\tilde k)$ with the indicated property on inverse perfections.
	
	We now prove the theorem for   general $\cO$ and $\fR= \tilde \cO/\pi^{me}\tilde \cO$.  We first  introduce a generalization of   Drinfeld's map.
	Since $\fR=\tilde \cO/\pi^{me}\tilde \cO=\oplus_{i=0}^{e-1} W_{m-1}(\tilde k)\pi^i$ \cite[Remark 3.1b)]{bga2},  for any  $\tilde k$-algebra $B$ there is a natural   homomorphism of $\tilde \cO$-algebras
	\begin{eqnarray*}\tilde u\colon \cR(B)= W_{m-1}(B)\otimes_{W_{m-1}(\tilde k)} \fR=\oplus_{i=0}^{e-1} W_{m-1}(B)\pi^i &\to& W_{\pi,q,me-1}(B)  \\
		\sum_{i=0}^{e-1} w_i\otimes \pi^i&\mapsto& \sum_{i=0}^{e-1} u(w_i)\pi^i,\end{eqnarray*}
	where $w_i\in W_{m-1}(B)$ and  $u\colon W_{m-1}(B)=W_{p,p,m-1}(B)\to W_{\pi,q,me-1}(B)$ is Drinfeld's map in \eqref{eq.um1}. 
	By \eqref{eq.adj2} and \eqref{eq.adj0} with  $R_*=\tilde \cO_*$, for any $\tilde k$-algebra $B$ we have
	\begin{multline*}
	\Hom_{ \tilde k}\big( \gr_{me-1}(A), B\big)\simeq \Hom_{\fR} \big(A,\cR(B)\big)\simeq
	\Hom_{\tilde \cO} \big(A,\cR(B)\big)\to \\
	\Hom_{\tilde \cO}(A,W_{\pi,q,me-1}(B))\simeq 
	\Hom_{\tilde \cO}(J_{me-1}(A), B)\simeq \Hom_{\tilde k}(J_{me-1}(A)\otimes_{\tilde \cO} \tilde k, B),\end{multline*}
	where the arrow in the middle is induced by $\tilde u$. Hence we have a natural homomorphism $J_{me-1}(A)\otimes_{\tilde \cO} \tilde k\to \gr_{me-1}(A)  $, or in other words  there is a natural morphism $  \gr_{me-1}(X)\to \ov{ J^{me-1}(X)}$ of $\tilde k$-schemes.
	Since  $\tilde u$ maps $\cR(B)\simeq W_{m-1}(B)\otimes_{W(\FF_q)} \cO/\pi^{me}\cO$ isomorphically to $W_{\pi,q,me-1}(B)$,  when $B$ is perfect \cite[1.2.1]{ff}, the morphism $v$ is an isomorphism when passing to inverse perfection \cite[\S 5]{bga2}.   Further, since for $e=1$ and $m=1$ the map $\tilde u=u_0$ is the identity on $B$, assertion ii) is clear.  
\end{proof}

\begin{example}
	Let $X=\AA^1_\fR$. Then $\gr_{m}(\AA^1_{\fR})=\cR=\WW_{m-1,\tilde k}$, $J^{m-1}(X)= \WW_{p,q,m-1}$,   the $W(\tilde k)$-scheme of ramified Witt vectors. The map
	$v\colon \gr_{m-1}(\AA^1_\fR)\to J^{m-1}(X)\times_\fR \Spec \tilde k$ in Theorem \ref{t.grj}, evaluated on a $\tilde k$-algebra $B$, coincides with Drinfeld's map $u_{m-1}$ in \eqref{eq.um} and therefore is not an isomorphism in general.
\end{example}

The map $\tilde u$ in the proof of the previous theorem is  studied in details in \cite{bc}  when showing that the perfection of the Greenberg algebra of $\tilde \cO$ is the special fiber of the ring scheme of ramified Witt vectors.

Let $X$ be an $\mathfrak R$-scheme of dimension $r$. As noted in \cite[p.316]{bui95} one cannot expect   that, in the ramified case, the Greenberg transform and the special fiber of the  jet space  have the same dimension at all levels. However, Theorem \ref{t.grj} implies equality for a cofinal  subset of the set of levels.

By Corollary \ref{BoBuSp} the special fiber of $\Jbo X$ coincides with the special fiber of the formal jet space   $\widehat{J^n_{Bu}(X)}$. The above theorem states that, for  $X$ a $W(\tilde k)$-scheme, the Greenberg realization $\gr_{n}(X)$ is isomorphic to the special fiber of $\widehat{J^n_{Bu}(X)}$.  We can pass 
to the limit on $n$ obtaining the following 
\begin{corollary}\label{c.infty}
	Let $\tilde k$ be any perfect field of characteristic $p$,  $X$   a $W(\tilde k)$-scheme and $\gr(X)=\varprojlim \gr^{W_n(\tilde k)}(X)$. Then the $\tilde k$-scheme $\gr(X)$ is isomorphic to the special fiber of the $p$-jet space of infinite level $J^\infty(X)=\varprojlim J^n(X)$ or equivalently to the special fiber of $\widehat{J^\infty(X)}=\varprojlim \widehat{J^n(X)}$.
\end{corollary}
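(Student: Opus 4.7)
The plan is to deduce the result from Theorem~\ref{t.grj}(i) by passing to the inverse limit over $n$. For each finite $n \geq 0$, applying that theorem with $\cO = \Z_p$, $e = 1$ and $\fR = W_n(\tilde k)$ (the case $m-1 = n$ there) produces an isomorphism
\[
v_n\colon \gr^{W_n(\tilde k)}(X) \longisomap \ov{J^n X}
\]
of $\tilde k$-schemes. The inverse limits $\gr(X) = \varprojlim \gr^{W_n(\tilde k)}(X)$ and $\varprojlim \ov{J^n X}$ both exist as $\tilde k$-schemes, by gluing on affine opens where the transition maps are affine, so it suffices to check that the $v_n$ are compatible with the natural transitions.

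Compatibility is the only nontrivial point. On the Greenberg side, truncation $W_{n+1}(\tilde k) \to W_n(\tilde k)$ induces $\gr^{W_{n+1}(\tilde k)}(X) \to \gr^{W_n(\tilde k)}(X)$ by functoriality of $\cR$ and $\gr^\fR$; on the jet side, the prolongation $u_n\colon J_n A \to J_{n+1}A$ induces $\ov{J^{n+1}X} \to \ov{J^n X}$. The morphism $v_n$ was constructed in the proof of Theorem~\ref{t.grj} by composing the Greenberg adjunction \eqref{eq.adj3}, the jet adjunction \eqref{eq.adj0} of Theorem~\ref{t.adjunction}, and the Drinfeld map \eqref{eq.um}; since $q = p$ here, this last map is the identity, and all remaining ingredients are manifestly natural with respect to truncation of Witt-vector length. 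Therefore the $v_n$ assemble into an isomorphism of pro-systems of $\tilde k$-schemes. For affine $X = \Spec A$, passing to the limit amounts to the ring-level statement that $\otimes_{W(\tilde k)}\tilde k$ commutes with the filtered colimit $\varinjlim J_n A = J^\infty A$, giving $\ov{J^\infty X} = \varprojlim \ov{J^n X}$ and hence $\gr(X) \simeq \ov{J^\infty X}$. The general case follows by descending to an affine open covering, using Corollary~\ref{cor.nm} on the jet side and the analogous gluing property of Greenberg realizations (\cite[\S 6]{bga}) on the other.

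The equivalent statement in terms of $\widehat{J^\infty X} = \varprojlim \widehat{J^n X}$ is then immediate from Corollary~\ref{BoBuSp}, which provides natural identifications $\ov{J^n X} \simeq \ov{\widehat{J^n X}}$ at each finite level; passing to the inverse limit in $n$ identifies the two special fibers. The main obstacle is the naturality check in the middle paragraph — verifying that each of the three ingredients making up $v_n$ really commutes with truncation in $n$; once this is in place, the inverse-limit manipulations are formal.
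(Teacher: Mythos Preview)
Your proposal is correct and follows precisely the approach the paper itself indicates: the paper simply says ``We can pass to the limit on $n$ obtaining the following'' immediately before stating the corollary, treating it as a direct consequence of Theorem~\ref{t.grj}(i). You have merely spelled out in more detail the compatibility of the finite-level isomorphisms $v_n$ with truncation and the gluing step via Corollary~\ref{cor.nm}, which the paper leaves implicit; one small point you might make explicit is that Theorem~\ref{t.grj} is stated for $X$ over the artinian ring $\fR=W_n(\tilde k)$, so strictly speaking it is applied to the reduction $X\times_{W(\tilde k)}\Spec W_n(\tilde k)$, whose jet space has the same special fiber as that of $X$.
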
  
Note that the above result generalizes the one in \cite[Theorem 2.10]{bui96} removing  the smoothness  hypothesis on $X$ as well as the condition that $\tilde k$ is algebraically closed.

\footnotesize{
	
}
\end{document}